\newcommand{\R}{\mathbb{R}}
\newtheorem{theorem}{Theorem}[section]
\newtheorem{proposition}{Proposition}[section]
\newtheorem{lemma}{Lemma}[section]
\newtheorem{corollary}{Corollary}[section]
\newtheorem{definition}{Definition}[section]
\newtheorem{remark}{Remark}[section]
\newcommand{\p}{\partial}
\newcommand{\bb}{\begin{equation}}
\newcommand{\ee}{\end{equation}}
\newcommand{\ba}{\begin{array}}
\newcommand{\ea}{\end{array}}
\newcommand{\f}{\frac}
\newcommand{\ds}{\displaystyle}
\newcommand{\al}{\alpha}
\newcommand{\be}{\beta}
\newcommand{\sign}{\text{sgn}\,}
\newcommand{\N}{{\mathbb N}}
\newcommand{\M}{{\cal M}}
\newcommand{\Span}{\text{Span}\,}
\newcommand{\supp}{\text{supp}}
\newcommand{\Diag}{\text{Diag}}
\newcommand{\letra}{ \renewcommand{\labelenumi}{\alph{enumi})}}  
\numberwithin{equation}{subsection}
\definecolor{TextColor}{rgb}{0.75, 0.75, 0.75}
\title{Local isometric immersions and breakdown of manifolds determined by Cauchy problems of the Degasperis-Procesi equation}
\author[1,2] {Igor Leite Freire}
\affil[1]{Department of Mathematical Sciences,
Loughborough University\\
 LE11 3TU Epinal Way, Loughborough, United Kingdom\\
\texttt{I.Leite-Freire@lboro.ac.uk}
}
\affil[2]{Departamento de Matemática, Universidade Federal de São Carlos\\
Rodovia Washington Luís, Km 235, 13565-905\\
São Carlos-SP, Brasil\\
  \texttt{igor.freire@ufscar.br} \\
  \texttt{igor.leite.freire@gmail.com}}
\begin{document}
\maketitle
\begin{abstract}
We prove that both local and non-local formulations of the Degasperis-Procesi equation possess a pseudospherical nature. As a result, solutions determined by Cauchy problems with non-trivial initial data and a minimal specific regularity define an orthonormal coframe for a pseudospherical metric within a designated strip. While the region is entirely described by the initial condition, the one-forms are determined by the corresponding solutions. Moreover, we establish that any non-trivial initial condition yields a second fundamental form, which is locally determined within the strip. The surface may collapse within a finite-height strip when the initial condition leads to wave breaking. We determine conditions for the coframe to be defined on the upper plane. We investigate possible integrability structures related to the triad of fundamental one-forms.
\end{abstract}

{\bf MSC classification 2020:}  58J60,  53C21, 35R01, 35A01, 35Q51.

\keywords{Degasperis-Procesi equation \and Equations describing pseudospherical surfaces \and First fundamental form \and Blow up of metrics \and Second fundamental form \and Integrable systems}
\newpage
\tableofcontents
\newpage

\section{Introduction}\label{sec1}

In recent decades, a significant focus has been placed on a third order dispersive equation known as $b-$equation \cite{dehoho}:
\bb\label{1.0.1}
u_t-u_{txx}+(b+1)uu_x=bu_xu_{xx}+uu_{xxx},
\ee
$b\in\R$. It has attracted much attention from diverse fields such as hydrodynamics, mathematical physics and analysis of PDEs ever since, see \cite{escher2008,pri-AML,freire-AML,holm-siam,holm-pla,zhou} and references therein.

Some of the main features of this family are their peakon solutions, given by
$$
u(x,t)=ce^{-|x-ct|},
$$
$c\in\R$, and the fact that the equation itself is a conservation law, that is,
\bb\label{1.0.2}
\p_t(u)+\p_x\Big(\f{b+1}{2}u^2-u_x^2-uu_{xx}-u_{tx}\Big)=0,
\ee
see \cite{holm-siam,holm-pla}. This last property seems to be, indeed, the only conservation law admitted by certain members, see \cite{holm-siam,holm-pla,freirejde}.

Despite the properties above, no one would dispute that most of the relevance of \eqref{1.0.1} comes from the fact that it encloses two quite famous integrable equations, namely, the Camassa-Holm (CH) \cite{chprl}
\bb\label{1.0.3}
u_t-u_{txx}+3uu_x=2u_xu_{xx}+uu_{xxx},
\ee
corresponding to $b=2$ in \eqref{1.0.1}, and the member $b=3$,
\bb\label{1.0.4}
u_t-u_{txx}+4uu_x=3u_xu_{xx}+uu_{xxx},
\ee
discovered by Degasperis and Procesi (DP) \cite{deg} and named after them.

Although interpreting the CH and DP equations as the two most renowned siblings within the family of $b-$equations is both natural and pertinent, they cannot be regarded as identical twins due to their distinct properties. Several differences, particularly from the perspective of the analysis of partial differential equations, are pointed out in \cite{liu2006,liu2007}.

The only two integrable members of \eqref{1.0.1} are just the CH and the DP equations, see \cite[Theorem 4]{mik}. In particular, the CH equation has a second order Lax pair \cite[Equation (6)]{chprl}, given by
\bb\label{1.0.5}
\ba{lcl}
\psi_{xx}&=&\ds{\Big(\f{1}{4}-\f{m}{2\lambda}\Big)\psi},\\
\\
\psi_t&=&\ds{-(\lambda+u)\psi_x+\f{1}{2}u_x\psi},
\ea
\ee
whereas the DP equation admits a third order Lax pair \cite[Equation (3.10)]{dehoho}
\bb\label{1.0.6}
\ba{lcl}
\psi_{xxx}&=&\psi_x-\lambda m\psi,\\
\\
\psi_t&=&\ds{\Big(u_x+\f{2}{3\lambda}\Big)\psi-\f{1}{\lambda}\psi_{xx}-u\psi_x}.
\ea
\ee

As a result of these Lax pairs, their corresponding zero curvature representations (ZCR) 
\bb\label{1.0.7}
\p_tX-\p_xT+[X,T]=0
\ee
are significantly different, since for the CH equation we have a $\frak{sl}(2,\R)-$valued representation, whereas its equivalent for the DP equation is $\frak{sl}(3,\R)-$valued.

The $\frak{sl}(2,\R)$ ZCR for the CH equation reveals an intrinsic geometric structure behind its solutions. In fact, applying the ideas coming from \cite{sasaki,chern}, under certain weak conditions its solutions give rise to certain two-dimensional abstract manifolds, more precisely, a pseudospherical surface (PSS – the same acronym will be used for both singular and plural forms) with Gaussian curvature ${\cal K}=-1$. For more details about ZCR and PSS equations, see \cite{reyes2000}.

A two-dimensional manifold ${\cal M}$ is said to be a PSS if there exists one-forms $\omega_1,\,\omega_2,\,\omega_3$ on ${\cal M}$ such that $\omega_1\wedge\omega_2\neq0$ and
\bb\label{1.0.8}
d\omega_1=\omega_3\wedge\omega_2,\quad d\omega_2=\omega_1\wedge\omega_3,\quad d\omega_3=\omega_1\wedge\omega_2.
\ee
If such forms exist, then they endow ${\cal M}$ with the metric 
\bb\label{1.0.9}
g=\omega_1^2+\omega_2^2
\ee
and a Gaussian curvature ${\cal K}=-1$ \cite{chern,reyes2011}.

As a result of \eqref{1.0.8}, if we write $\omega_{i}=f_{i1}dx+f_{i2}dt$, $1\leq  i\leq 3$, and define
\bb\label{1.0.10}
X=\f{1}{2}\begin{pmatrix}
f_{21}& f_{11}-f_{31}\\
\\
f_{11}+f_{31} & -f_{21}
\end{pmatrix}\quad\text{and}\quad
T=\f{1}{2}\begin{pmatrix}
f_{22}& f_{12}-f_{32}\\
\\
f_{12}+f_{32} & -f_{22}
\end{pmatrix},
\ee
then \eqref{1.0.7} is satisfied. Conversely, given $\frak{sl}(2,\R)$ matrices $X$ and $T$ satisfying \eqref{1.0.7}, we can obtain forms $\omega_1$, $\omega_2$ and $\omega_3$, satisfying \eqref{1.0.8} and each open and simply connected set $U\subseteq\R^2$ for which $\omega_1\wedge\omega_2\neq0$ everywhere is endowed with a PSS structure.

The CH equation has an extensive literature concerning analysis of PDEs and its connections with PSS as well. For example, \cite{const1998-1,const1998-2, const2000-1} and \cite{keti2015,reyes2002} explore these rather different aspects. However, the combination of these two natures has been barely explored. More precisely, the geometry of PSS determined by solutions of Cauchy problems involving certain equations of the type
\bb\label{1.0.11}
u_t-u_{txx}=\lambda uu_{xxx}+G(u,u_x,u_{xx})
\ee
seem to have begun very recently \cite{nilay,nazime,freire2023-1,freire2023-2}. In particular, \cite{freire2023-1,freire2023-2} investigate how the singularities of the solutions of the CH equation are transferred to the first fundamental form of a surface determined by a solution developing wave breaking.

The one-forms determined by the solutions of the CH equation known so far are not compatible with a second fundamental form having finite dependence of the derivatives of the solution \cite{tarcisio}. As a result, in practical terms this prevents us from constructing a second fundamental form of the PSS determined by the CH equation \cite{tarcisio}, and we cannot locally realise how surfaces determined by its solutions, including those emanating from Cauchy problems, can be locally immersed in $\mathbb{R}^3$ \cite{freire2023-2}.

The existence of a linear problem \eqref{1.0.5} for the CH equation anticipates its geometrical significance, implying the determination of the first fundamental form for an abstract surface. Unlike the CH equation, the existence of a $\frak{sl}(3,\R)$ ZCR suggests the impossibility for the solutions of the DP equation to determine surfaces in this way. Consequently, a geometric analysis study, such as those in \cite{nilay,freire2023-1,freire2023-2,nazime}, cannot be expected at first sight.

The situation, however, could not be more remarkably and dramatically unexpected. Not only {\it can} we construct first fundamental forms for PSS from the solutions of the DP equation, but unimaginably we also acquire a second fundamental form compatible with the first one. Therefore, it is possible to locally isometrically embed the surfaces defined by the solutions of the DP equation into the three-dimensional Euclidean space. This seems to be an unnoticed aspect reinforcing how different the CH and DP equations are.

As far as the author knows, the first work showing relations between the DP equation and PSS surfaces is \cite{keti2015}, where the following family of one-forms --$\mu$ is an arbitrary parameter-- were reported (see \cite[Example 2.8]{keti2015})
\bb\label{1.0.12}
\ba{lcl}
\omega_1&=&(u-u_{xx})dx+(u_x^2-2uu_x+uu_{xx})dt,\\
\\
\omega_2&=&\Big(\mu(u-u_{xx})\pm 2\sqrt{1+\mu^2}\Big)dx+\mu(u_x^2-2uu_x+uu_{xx})dt,\\
\\
\omega_3&=&\Big(\pm\sqrt{1+\mu^2}(u-u_{xx})+2\mu\Big)dx\pm\sqrt{1+\mu^2}(u_x^2-2uu_x+uu_{xx})dt.
\ea
\ee

Subsequently, in \cite{tarcisio} Castro Silva and Kamran studied the problem of immersions of PSS determined by the solutions of the class of equations \eqref{1.0.11}, whose classification had been previously carried out by Castro Silva and Tenenblat in \cite{keti2015}. One of the consequences of \cite[Theorem 1.1]{tarcisio} is the possibility of locally describe second fundamental forms compatible with \eqref{1.0.12}.

The purpose of this paper is to understand how a given initial datum influences the corresponding PSS determined by the solutions of the DP equation. Furthermore, we want to shed light on the geometry determined by solutions developing singularities in finite time.

The main issue of such an investigation is a sort of ``incompatibility'' between analysis and geometry: while the former deals with solutions with finite regularity, the later is mostly concerned with $C^\infty$ structures, that ultimately require $C^\infty$ solutions. More dramatically, the tools used to understand Cauchy problems involving \eqref{1.0.1} assume finite regularity of the solution, see \cite{const1998-1,const1998-2,const2000-1,escher2008, freirejde, freire-AML,henry,liu2006,liu2007,yin2003,yin2004,zhou}.

In addition to the aforesaid above, the machinery employed to tackle the problems from the analysis side requires us to see the DP as a non-local evolution equation, which only holds for certain function spaces. As a result, we have to make this approach compatible with geometry, which is the same to say that we have to interpret the non-local form of the DP as a an equation describing PSS.

In the next section we present our framework in order to address the two major problems mentioned, as well as our main results and how they are in light of the state of the art in both field (Analysis and Geometry of PDEs). Then we present a general picture of the structure of the manuscript.

\section{State of the art and main results}

In this section we present the main concepts and results, as well as we discuss the problems to be tackled in light of the current literature.

\subsection{The local and non-local forms of the DP equation and differentiability of solutions}\label{subsec2.1}

In the literature of PDEs and PSS, the focus is generally on considering smooth ($C^\infty$) solutions, leading to smooth ($C^\infty$) one-forms satisfying \eqref{1.0.8}, and thereby a smooth ($C^\infty$) metric given by \eqref{1.0.9}. Henceforth, by smooth we mean $C^\infty$. 

The solutions we wish to examine possess finite regularity. To make matters worse, the original form of the DP equation \eqref{1.0.3}, that sometimes will be referred to as ``local form'', is not the best suited for the analytical approaches employed herein. From an analytical standpoint, we should instead consider its non-local evolution form.
\bb\label{2.1.1}
u_t+uu_x+\f{3}{2}\p_x\Lambda^{-2}(u^2)=0.
\ee

As we shall see, the operator $\p_x$ commutes with both $\Lambda^2:=1-\p_x^2$ and its inverse $\Lambda^{-2}$. Applying $\Lambda^{2}$ to \eqref{2.1.1} we get \eqref{1.0.3}. The problem now is to give meaning to the operators $\Lambda^{-2}$ and $\p_x\Lambda^{-2}$. To this end, we shall make a digression and revisit Fourier analysis and some function spaces.

First of all, \eqref{2.1.1} can be seen as a dynamical system into a function space. Therefore, its solutions are nothing but a family of functions in some function space ${\cal B}$ parametrized by $t$, which is usually thought as time. 

Equation \eqref{2.1.1}, once solved for an interval of existence $I$, has to be then analysed in light of the function space the solutions belongs to. A crucial step in this endeavour is the determination of a suitable space for which the equation and its solutions have meaning. For \eqref{2.1.1} it is vital to find a function space compatible with the operator $\Lambda^{-2}$.

It is well known that the Fourier transform\footnote{The constant appearing in the definition of the Fourier transform may vary, which may lead to a different constant in the inversion transform. For example, compare \cite[page 200]{const-book} and \cite[page 222]{taylor}.} 
$$
{\cal F}(f)(k):=\f{1}{\sqrt{2\pi}}\int_\R e^{-ikx}f(x)dx,
$$
applies constant differential operators into polynomials and vice-versa, since $({\cal F}(\p_xf))(k)=ik\hat{f}(k)$, where ${\cal F}(f)(k)$ is replaced by $\hat{f}(k)$ for sake of simplicity. This is true provided that the domain of the involved operators is compatible with the Fourier transform.

Let us consider $f\in L^2(\R)$. Although such a function need not to be even $C^1$, we additionally assume that $f''\in L^2(\R)$. Integration by parts leads us to
$$
\hat{h}(k):={\cal F}(\Lambda^2(f))(k)=(1+k^2)\hat{f}(k),
$$
and then, we have
$$
\hat{f}(k)=\f{\hat{h}(k)}{1+k^2}=:\hat{G}(k)\hat{h}(k).
$$

Recalling that ${\cal F}(G\ast h)(k)=\sqrt{2\pi}\hat{G}(k)\hat{h}(k)$ (e.g, see \cite[Theorem 5.3]{const-book}) and $({\cal F}^{-1}(\hat{G}\hat{h}))(x)=(G\ast h)(x)/\sqrt{2\pi}$ whether it exits, then we conclude that
\bb\label{2.1.2}
G(x)=\f{e^{-|x|}}{2}.
\ee

This explains why very often \eqref{2.1.2} is referred as {\it Green function} of the (Helmholtz) operator $\Lambda^{2}=1-\p_x^2$. Hence, if $\delta$ denotes the Dirac delta distribution, then
$$
\int_\R G(x-y)(1-\p_y^2)f(y)dy=\int_\R\Big((1-\p_y^2)G(x-y)\Big)f(y)dy=\int_\R\delta(y-x)f(y)dy=f(x),
$$
in view of the self-adjointness of $\Lambda^{2}$ with respect to the $L^2$ inner product.

All the relations mentioned above can be schematically summarised in Figure 1.
\begin{figure}[h!]
\begin{center}
\begin{tikzcd}
\Lambda^2(\cdot)=(1-\p_x^2)(\cdot)\arrow{rrrrr}{\text{Fourier transform}}  &&&&&1+k^2\arrow{d}{\text{Inverse Fourier transform}}\\
\Lambda^{-2}(\cdot)=G\ast (\cdot)&&&&&\arrow{lllll}{\text{Inverse operator}} \ds{G(x)=\f{e^{-|x|}}{2}}
\end{tikzcd}
\end{center}
\caption{Actions of the operators $\Lambda^2$ and $\Lambda^{-2}$, and their relations with Fourier transform. The resulting action of the former operator on a function decreases its regularity, whereas its inverse, being given through a convolution process, improves the regularity of the resulting functions when compared with the original ones.}
\end{figure}

It is obvious that $(\p_x\Lambda^{2})(f)=\Lambda^2(\p_xf)$. In fact, the well known property of convolution 
$$
\p_x(f_1\ast f_2)=((\p_x f_1)\ast f_2)=(f_1\ast(\p_x f_2))
$$
implies that $\p_x$ commutes with $\Lambda^{-2}$.

Let us consider the class of smooth rapidly decaying functions ${\cal S}(\R)$ (Schwartz class, see \cite[page 125]{const-book}), with dual space ${\cal S}'(\R)$, whose members are called tempered distributions. We can define the Fourier transform $\hat{f}$ of a $f\in{\cal S}'(\R)$ throughout the relation
$\langle \hat{f},\phi\rangle_{L^2}=\langle f,\hat{\phi}\rangle_{L^2}$, for any $\phi\in {\cal S}(\R)$, where $\langle\cdot,\cdot\rangle_{L^2}$ denotes the usual $L^2(\R)$ inner product.
 
For $s\in\R$, we can define the Sobolev space of order $s$, namely,
$$
H^s(\R)=\{f\in{\cal S}'(\R),\,\,(1+k^2)^{s/2}\hat{f}(k)\in L^2(\R)\}.
$$

An extremely relevant result for our purposes is:
\begin{lemma}{\tt (Sobolev Embedding theorem, see \cite[Theorem 6.21]{const-book})}\label{lema2.1}
    If $s>k+1/2$, for some integer $k\geq0$, then all functions $f\in H^s(\R)$ belong to $C^k(\R)\cap L^\infty(\R)$.
\end{lemma}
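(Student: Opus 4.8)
\emph{Proof proposal.} The plan is to reduce the entire statement to the Fourier inversion formula combined with a single Cauchy--Schwarz estimate. Throughout I write $\xi$ for the dual (Fourier) variable, to avoid clashing with the integer $k$ appearing in the statement. The central claim I would establish first is that the hypothesis $s>k+1/2$ forces
\[
\xi^{j}\hat{f}(\xi)\in L^1(\R)\quad\text{for every integer } j \text{ with } 0\le j\le k.
\]
Granting this, everything else follows by differentiating the inversion integral, so the whole argument hinges on one integrability estimate.

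For that estimate I would use $|\xi|^{j}\le (1+\xi^2)^{j/2}$ and split the weight as $(1+\xi^2)^{j/2}=(1+\xi^2)^{(j-s)/2}\,(1+\xi^2)^{s/2}$, so that Cauchy--Schwarz gives
\[
\int_\R |\xi|^{j}\,|\hat{f}(\xi)|\,d\xi \;\le\; \Big(\int_\R (1+\xi^2)^{\,j-s}\,d\xi\Big)^{1/2}\Big(\int_\R (1+\xi^2)^{\,s}\,|\hat{f}(\xi)|^2\,d\xi\Big)^{1/2}.
\]
The second factor is exactly $\|f\|_{H^s}$, which is finite since $f\in H^s(\R)$ by hypothesis. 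The first factor is a finite integral precisely when $2(s-j)>1$, i.e. $s>j+1/2$; and this holds for every $0\le j\le k$ because $s>k+1/2\ge j+1/2$. This proves the $L^1$ claim.

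With $\hat{f}\in L^1(\R)$ (the case $j=0$) the inversion integral $f(x)=\tfrac{1}{\sqrt{2\pi}}\int_\R e^{i\xi x}\hat{f}(\xi)\,d\xi$ converges absolutely; it is bounded, with $\|f\|_{L^\infty}\le \|\hat{f}\|_{L^1}/\sqrt{2\pi}$, and continuous by dominated convergence (dominating function $|\hat{f}|$), which identifies $f$ with a continuous, bounded representative. For the higher derivatives, since $\xi^{j}\hat{f}\in L^1$ the integrand may be differentiated $j$ times under the integral sign, the $j$-th differentiated integrand being dominated by $|\xi|^{j}|\hat{f}(\xi)|\in L^1$, yielding
\[
f^{(j)}(x)=\frac{1}{\sqrt{2\pi}}\int_\R (i\xi)^{j}\,e^{i\xi x}\,\hat{f}(\xi)\,d\xi,\qquad 0\le j\le k,
\]
each of which is again continuous and bounded by the same reasoning. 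Hence the continuous representative of $f$ lies in $C^{k}(\R)\cap L^\infty(\R)$.

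I expect the main obstacle to be the sharp convergence of $\int_\R (1+\xi^2)^{\,j-s}\,d\xi$: comparing the integrand with $|\xi|^{-2(s-j)}$ at infinity shows convergence exactly at the threshold $2(s-j)>1$, which is where the ``$+1/2$'' in the hypothesis is forced and cannot be weakened. A secondary, more bookkeeping-type subtlety is the passage from the a priori distributional ($L^2$) object $f$ to a genuinely pointwise-defined function together with the legitimacy of differentiating under the integral; both are dispatched by dominated convergence once the integrability of $\xi^{j}\hat{f}$ has been secured, so they do not require separate work beyond invoking the estimate above.
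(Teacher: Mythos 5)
Your proof is correct and complete: the paper does not actually prove this lemma but simply imports it from \cite[Theorem 6.21]{const-book}, and your argument --- the Cauchy--Schwarz splitting $(1+\xi^2)^{j/2}=(1+\xi^2)^{(j-s)/2}(1+\xi^2)^{s/2}$ showing $\xi^j\hat f\in L^1(\R)$ for $0\le j\le k$ exactly when $s>j+1/2$, followed by Fourier inversion and dominated convergence to differentiate under the integral --- is the standard textbook proof that the citation points to. Your remark on the sharpness of the threshold $2(s-j)>1$ and the need to pass to the continuous representative is also accurate, so there is nothing to fix.
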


The operator $\p_x$ is a bounded linear map from $H^s(\R)$ to $H^{s-1}(\R)$; $\Lambda^{t}:H^{s}(\R)\rightarrow H^{s-t}(\R)$ is a unitary isomorphism; ${\cal S}(\R)\subseteq H^s(\R)\subseteq {\cal S}'(\R)$ and ${\cal S}(\R)$ is dense in $H^s(\R)$, for any $s,t\in\R$. Finally, for $s\geq t$, we have the embedding $H^s(\R)\hookrightarrow H^t(\R)$.

Let us pay our promise and provide meaning to $\p_x\Lambda^{-2}$. It is straightforward to check that
$$
\ba{lcl}
\ds{\p_x\Big((\Lambda^{-2}(f))(x)\Big)}&=&\ds{\p_x\Big(\int_\R \f{e^{-|x-y|}}{2}f(y)dy\Big)=\int_\R \Big(-\f{\sign{(x-y)}e^{-|x-y|}}{2}\Big)f(y)dy}\\
\\
&=&\ds{\Big(\big(-\f{\sign{(\cdot)}e^{-|\cdot|}}{2}\big)\ast f\Big)(x)=\Big(\big(\p_x G\big)\ast f\Big)(x)}.
\ea
$$

The derivative above, under the sign of integral, has to be understood in the distributional sense and, as a result, we have the distribution
\bb\label{2.0.2}
G'(x)=-\f{\sign{x}}{2}e^{-|x|},
\ee
implying that the operator $\p_x\Lambda^{-2}:H^{s}(\R)\rightarrow H^{s+1}(\R)$ is given by $\p_x\Lambda^{-2}(f)(x)=(G'\ast f)(x)$.

In view of the precedent discussion, a strong solution of \eqref{2.1.1} is a function $u\in C^1(\R\times I)$, where $I\subseteq\R$ is the interval of existence of $u$, satisfying the equation identically. On the other, by a strong solution of \eqref{1.0.2} we mean a function satisfying \eqref{1.0.2} identically, but belonging to $C^{3,1}(\R\times I):=\{u:\R\times I\rightarrow\R; \p_x^iu,\,\p_x^j\p_tu\in C^0(\R\times I),\,\,0\leq i\leq 3,\,\,0\leq j\leq 2\}$. Therefore, these two forms of the DP equation are not equivalent, but they may agree on certain function spaces.

\subsection{Fundamental forms of a surface and immersions}\label{subsec2.2}

Let $\M$ be a domain in $\R^2$, that is, an open and simply connected set. For each $p\in\M$, $T_p\M$ denotes the tangent space to $\M$ at $p$. Assume that $e_1,e_2$ are two vector valued functions on $\M$, satisfying the following conditions at each point $p\in\M$:
\begin{itemize}
\item $\{e_1,e_2\}$ is orthonormal with respect to inner product $\langle\cdot,\cdot\rangle$ of $\R^3$;
\item $\Span\{e_1,e_2\}=T_p\M$;
\item the dual bases of $\{e_1,e_2\}$ is denoted by $\{\omega_1,\omega_2\}$ and, in passing, $\Span\{\omega_1,\omega_2\}=T_p^\ast\M$.
\end{itemize}

Both $\omega_1$ and $\omega_2$ are one-forms, that can be generically written as $\omega=f(x,t)dx+g(x,t)dt$, where $f$ and $g$ are certain functions. We say that $\omega$ is of class $C^k$ if and only if both $f$ and $g$ are $C^k$. They form a basis for the cotangent space $T_p^\ast\M$ as long as they are linearly independent (LI), which is measured by the condition $\omega_1\wedge\omega_2\big|_p\neq0$, where $\wedge$ denotes the wedge product (for further details, see \cite[page 39]{cle}).

The forms $\omega_1$ and $\omega_2$ uniquely define a third one $\omega_3$ \cite[Lemma 5.1, page 289]{neil}, named Levi-Civita connection form. These three one-forms satisfy the following structure equations:
\bb\label{2.2.1}
d\omega_1=\omega_2\wedge\omega_{21},\quad d\omega_2=\omega_1\wedge\omega_{12},
\ee
which determines $\omega_3:=\omega_{12}$, and the Gauss equation
\bb\label{2.2.2}
d\omega_3=-{\cal K}\,\omega_1\wedge\omega_2,
\ee
that gives the Gaussian curvature ${\cal K}$ of $\M$. For ${\cal K}=-1$, system \eqref{2.2.1}-\eqref{2.2.2} is identical to \eqref{1.0.8}. 

This triad of one-forms determines a surface from an abstract point of view, in the sense they give the curvature of a surface, and also determines its first fundamental form, vernacularly known as metric, given by
\bb\label{2.2.3}
I(v)=\omega_1(v)^2+\omega_2(v)^2,
\ee
with the convection $\al\be=\al\otimes\be$ and $\al^2=\al\al$, for any (one-)forms $\al$ and $\be$. For the definition of tensorial product $\otimes$, see \cite[page 39]{cle}. 

The intrinsic aspects mentioned above enables a being living on the surface calculate distance walked on $\M$, or angles. However, it does not tell us anything about how the surface looks like for three-, or higher, dimensional beings (like us) observing it.

The solution of the problem above is to look for connection forms, that inform the orientation of the surface into the three dimensional space, that is, it deals with an extrinsic aspect of the surface.

Since $\langle e_i,e_j\rangle$ is either $0$ or $1$, depending on whether $i=j$ or not, we then have
\bb\label{2.2.4}
\langle d e_i,e_j\rangle+\langle e_i,de_j\rangle=0,
\ee
where $d(\cdot)$ denotes the usual differential, and we can define new one-forms
\bb\label{2.2.5}
\omega_{ij}=\langle de_i,e_j\rangle.
\ee
From \eqref{2.2.5} we see that $\omega_{ij}=-\omega_{ji}$. In particular, $\omega_{ii}=0$.

The dual forms $\omega_1$ and $\omega_2$, jointly with the connection forms, satisfy the equations \eqref{2.2.1},
\bb\label{2.2.6}
\omega_1\wedge\omega_{13}+\omega_2\wedge\omega_{23}=0,
\ee
and
\bb\label{2.2.7}
d\omega_{12}=\omega_{13}\wedge\omega_{32},\quad d\omega_{13}=\omega_{12}\wedge\omega_{23},\quad d\omega_{23}=\omega_{21}\wedge\omega_{13}.
\ee

Since $\omega_1$ and $\omega_2$ are LI, from \eqref{2.2.6} and the Cartan lemma \cite[Lemma 2.49]{cle} we conclude the existence of functions $a,b,c$ such that
\bb\label{2.2.8}
\omega_{13}=a\omega_1+b\omega_2,\quad \omega_{23}=b\omega_1+c\omega_2.
\ee

\begin{lemma}(\cite[Theorem 10-19, page 232]{gug}, \cite[Theorem 10-18, page 232]{gug})\label{lemma2.2}
Let $\omega_1$, $\omega_2$, $\omega_{12}$, $\omega_{13}$, and $\omega_{23}$ be $C^1$ one-forms. Then they determine a local surface up to a euclidean motion if and only if $\omega_1\wedge\omega_2\neq0$ and equations \eqref{2.2.1}, \eqref{2.2.6} and \eqref{2.2.7} are satisfied.  
\end{lemma}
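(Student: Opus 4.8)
The plan is to recognise this as the fundamental theorem of surface theory (Bonnet) cast in the language of moving frames, and to prove the two implications separately. The necessity (``only if'') direction is essentially contained in the computation preceding the statement: if the forms arise from a genuine immersion $f\colon\M\to\R^3$ with adapted orthonormal frame $\{e_1,e_2,e_3\}$ (with $e_3=e_1\times e_2$ the unit normal), then \eqref{2.2.4} forces the matrix of one-forms $\Omega:=(\omega_{ij})$ to be skew-symmetric, and differentiating the frame relations $de_i=\sum_j\omega_{ij}e_j$ together with $df=\omega_1 e_1+\omega_2 e_2$, using $d^2=0$, reproduces \eqref{2.2.1}, \eqref{2.2.6} and \eqref{2.2.7}; the condition $\omega_1\wedge\omega_2\neq0$ merely records that $df$ has rank two. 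So the substance lies entirely in the converse.

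For sufficiency I would first reconstruct the frame and then the immersion. Assemble the skew-symmetric matrix $\Omega=(\omega_{ij})$ (with $\omega_{ii}=0$, $\omega_{ji}=-\omega_{ij}$) and consider the linear Pfaffian system $d\mathbf{E}=\Omega\,\mathbf{E}$ for the $\R^3$-valued triple $\mathbf{E}=(e_1,e_2,e_3)$. The Frobenius integrability condition for this system is exactly $d\Omega=\Omega\wedge\Omega$, that is $d\omega_{ij}=\sum_k\omega_{ik}\wedge\omega_{kj}$, and unwinding the indices shows these are precisely equations \eqref{2.2.7}. Since $\M$ is open and simply connected, Frobenius yields a solution $\mathbf{E}$ on all of $\M$ once an initial orthonormal frame is prescribed at a base point $p_0$. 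Next I would verify that the frame stays orthonormal: the Gram matrix $G=(\langle e_i,e_j\rangle)$ obeys $dG=\Omega G+G\,\Omega^{\!\top}=\Omega G-G\Omega$ by skew-symmetry, so $G\equiv\mathrm{Id}$ is the unique solution with $G(p_0)=\mathrm{Id}$. Finally, to build the surface I would integrate $df=\omega_1 e_1+\omega_2 e_2$; a direct exterior-derivative computation of the right-hand side shows that the $e_1$- and $e_2$-components of $d(df)$ vanish by \eqref{2.2.1}, while the $e_3$-component vanishes by \eqref{2.2.6}. Hence the $\R^3$-valued one-form $\omega_1 e_1+\omega_2 e_2$ is closed, and the Poincaré lemma on the simply connected $\M$ produces $f$. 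Orthonormality of $e_1,e_2$ gives the induced metric $\omega_1^2+\omega_2^2$, the condition $\omega_1\wedge\omega_2\neq0$ guarantees $f$ is an immersion with normal $e_3$, and \eqref{2.2.8} identifies $\omega_{13},\omega_{23}$ with the second fundamental form.

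Uniqueness up to a Euclidean motion I would obtain from the uniqueness in Frobenius: given two solutions, a rotation at $p_0$ aligns their frames, whence the frames coincide throughout $\M$ by ODE uniqueness, and then the two immersions differ only by the constant of integration, namely a translation. The main obstacle—and the only real content—is the pair of compatibility checks: identifying \eqref{2.2.7} with the Frobenius condition $d\Omega=\Omega\wedge\Omega$, and verifying that \eqref{2.2.1} together with \eqref{2.2.6} are exactly what make $\omega_1 e_1+\omega_2 e_2$ closed. A secondary point requiring care is the regularity bookkeeping: with $\Omega$ only $C^1$, the frame $\mathbf{E}$ is $C^2$, so $df$ has $C^1$ coefficients and $f$ is $C^2$, which is the natural regularity in which the classical statement \cite[Theorems 10-18 and 10-19, page 232]{gug} holds.
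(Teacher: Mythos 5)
The paper offers no proof of this lemma: it is a quoted classical result, attributed to Guggenheimer \cite{gug} and identified in the surrounding text as the fundamental theorem of surface theory (Bonnet theorem). So there is no in-paper argument to compare against; what can be said is that your proof is correct and is the standard moving-frames proof, essentially the one in the cited sources. Your three key identifications all check out: writing $\Omega=(\omega_{ij})$ with $\omega_{ji}=-\omega_{ij}$ and $\omega_{ii}=0$, the compatibility condition $d\Omega=\Omega\wedge\Omega$ for the linear Pfaffian system $d\mathbf{E}=\Omega\,\mathbf{E}$ unwinds, index by index, exactly to the three equations \eqref{2.2.7}; the Gram-matrix computation $dG=\Omega G-G\Omega$ (using skew-symmetry, which is where \eqref{2.2.4} enters) correctly propagates orthonormality from the base point, since $G\equiv\mathrm{Id}$ is the unique solution with identity initial value; and in $d(\omega_1e_1+\omega_2e_2)$ the $e_1$- and $e_2$-components vanish by \eqref{2.2.1} while the $e_3$-component equals $-(\omega_1\wedge\omega_{13}+\omega_2\wedge\omega_{23})$, which vanishes by \eqref{2.2.6}, so the Poincar\'e lemma applies on a simply connected neighbourhood --- which suffices, since the statement is local. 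Two minor points to pin down if you write this out in full: when you ``align the frames by a rotation at $p_0$,'' you should insist that both frames are adapted with $e_3=e_1\times e_2$, so that the aligning orthogonal map lies in $SO(3)$ rather than $O(3)$ (otherwise the rigid motion could involve a reflection); and your regularity bookkeeping ($\Omega\in C^1$ gives $\mathbf{E}\in C^2$ by bootstrapping $d\mathbf{E}=\Omega\,\mathbf{E}$, hence $f\in C^2$ from $df=\omega_1e_1+\omega_2e_2$) is right and matches the $C^1$ hypothesis of the lemma.
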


According to Lemma \ref{lemma2.2}, a set of $C^1$ one-forms in $\R^3$ satisfying \eqref{2.2.1}, \eqref{2.2.6} and \eqref{2.2.7} defines a surface $\M$ in the Euclidean space, at least locally. This result is known as the fundamental theorem of surface theory or Bonnet theorem, as referenced in \cite[theorem 11, page 143]{ilka}, \cite[theorem 4.39, page 127]{cle}, and \cite[theorem 4.24, page 153]{ku}. For us, the only case of truly interest is ${\cal K}=-1$, a situation for which the system \eqref{2.2.1}--\eqref{2.2.2} is equivalent to \eqref{1.0.8}.

Surfaces for which their Gaussian curvatures are constant and negative are called {\it pseudospherical} surfaces \cite[page 9]{cle}.

\subsection{Connections between PDEs and differential geometry of surfaces}\label{subsec2.3}

Chern and Tenenblat introduced the concept of pseudospherical equations \cite{chern}, establishing connections between certain PDEs and infinitely differentiable two-dimensional Riemannian manifolds. Their work was based on a remarkable observation by Sasaki \cite{sasaki}, who demonstrated links between solutions of special equations (obtained through the AKNS method \cite{akns}) and the intrinsic geometry of surfaces with constant Gaussian curvature ${\cal K}=-1$.

The two seminal papers \cite{chern,sasaki} served as seeds for a novel branch of surface geometry and relation to PDEs, profoundly influencing numerous subsequent research papers, as seen in \cite{beals,keti2015,tarcisio,cat,ding,nilay,freire-tito-sam,kah-book,kah-cag,kah,kamran,reyes2000,reyes2002,reyes2006-sel,reyes2006-jde,reyes2011,nazime} and references therein. For a more comprehensive treatment of the subject, see \cite[chapter 1]{keti-book}.

In \cite[Example 3.10]{keti2015} it was shown that smooth solutions of the local form of the DP give rise to metrics for a surface of constant Gaussian curvature ${\cal K}=-1$, see also \cite[Theorem 3.4]{keti2015}. The present work aims at studying, however, properties of the non-local form of the DP equation and how they connect with the geometry of surfaces. 

As mentioned in Subsection \ref{subsec2.1}, solutions of the two different forms of the DP are not necessarily coincident. Therefore, the first aspect we have to deal with is to construct a framework in which we can give geometric meaning to both \eqref{1.0.3} and \eqref{2.1.1} simultaneously. That being so, these two forms of the DP equation have to describe the same geometric objects (surfaces).

This lead us to the following recent definition.
\begin{definition}{\tt($C^k$ PSS modelled by ${\cal B}$ and ${\cal B}-$PSS equation, \cite[Definition 2.1]{freire2023-1})}\label{def2.1}
Let ${\cal B}$ be a function space. A differential equation 
\bb\label{2.3.1}
{\cal E}(x,t,u,u_{(1)},\cdots,u_{({n)}})=0,
\ee
for a dependent variable (function) $u\in{\cal B}$, is said to describe a pseudospherical surface of Gaussian curvature ${\cal K}=-1$ and class $C^k$ modeled by ${\cal B}$, $k\in\N$, or it is said to be of ${\cal B}-$pseudospherical type (${\cal B}$-PSS equation, for short), if it is a necessary and sufficient condition for the existence of functions $f_{ij}=f_{ij}(x,t,u,u_{(1)},\cdots,u_{(n)})$, $1\leq i\leq 3,\,\,,1\leq j\leq 2$, depending on $u$ and its derivatives up to a finite order $n$, such that:
\begin{itemize}
    \item ${\cal B}\subseteq C^k$;
    \item the functions $f_{ij}$ are $C^k$ with respect to their arguments;
    \item the forms 
    \bb\label{2.3.2}
\omega_i=f_{i1}dx+f_{i2}dt,\quad 1\leq i\leq 3,
\ee
satisfy the structure equations of a pseudospherical surface of Gaussian curvature ${\cal K}=-1$, that is,
\bb\label{2.3.3}
d\omega_1=\omega_3\wedge\omega_2,\quad d\omega_2=\omega_1\wedge\omega_3,\quad d\omega_3=\omega_1\wedge\omega_2;
\ee
    \item the condition $\omega_1\wedge\omega_2\neq0$ is satisfied.
\end{itemize}
\end{definition}

Whenever the function space is not exactly relevant, or clear, and no confusion is possible, we simply say PSS equation in lieu of ${\cal B}-$PSS equation.

\begin{remark}
Condition \eqref{2.3.3} is satisfied if and only if there exist functions $\mu_1$, $\mu_2$, and $\mu_3$ that vanish identically on the solutions of \eqref{2.3.1}. In other words, the conditions are as follows:
$$
d\omega_1-\omega_3\wedge\omega_2 = \mu_1dx\wedge dt, \,\,\,
d\omega_2-\omega_1\wedge\omega_3 = \mu_2dx\wedge dt, \,\,\,
d\omega_3-\omega_1\wedge\omega_2 = \mu_3dx\wedge dt,
$$
subject to the constraints $\mu_1\big|_{\eqref{2.3.1}} \equiv 0$, $\mu_2\big|_{\eqref{2.3.1}} \equiv 0$ and $\mu_3\big|_{\eqref{2.3.1}} \equiv 0$.

In simpler terms, the functions $\mu_1$, $\mu_2$, and $\mu_3$ must be zero when evaluated on the solutions of equation \eqref{2.3.1}.
\end{remark}

\begin{definition}{\tt(Generic solution, \cite[Definition 2.2]{freire2023-1})}\label{def2.2}
A function $u:U\rightarrow\R$ is called {\it generic solution} for the ${\cal B}-$PSS equation \eqref{2.3.1} if:
\begin{enumerate}\letra
\item $u\in{\cal B}$;
\item It is a solution of the equation;
\item The one-forms \eqref{2.3.2} are $C^k$ on $U$;
\item There exists at least a simply connected open set $\Omega\subseteq U$ such that $\omega_1\wedge\omega_2\big|_{p}\neq0$, for each $p\in\Omega$.
\end{enumerate}

Otherwise, $u$ is said to be {\it non-generic}.
\end{definition}

The notions introduced in these two definitions give us the basis for the our first result.

\begin{theorem}\label{teo2.1}
Both local and non-local forms of the DP equation (equations \eqref{1.0.4} and \eqref{2.1.1}, respectively) describe PSS of class $C^1$ modelled by ${\cal B}=C^{0}(H^{4}(\R),[0,T))\cap C^{1}(H^{3}(\R),[0,T))$, $T>0$, with one-forms given by \eqref{1.0.12}. In particular, in this class of solutions, the surfaces described by the local form of the equation are equivalent to that of the non-local form. Furthermore, a function in the class ${\cal B}$ is a solution of \eqref{1.0.4} if and only if it is a solution of \eqref{2.1.1}.
\end{theorem}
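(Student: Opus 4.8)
The plan is to prove the theorem in two logically separate pieces: first the analytic equivalence of the two forms of the DP equation on the space $\mathcal{B}$, and then the geometric claim that the one-forms \eqref{1.0.12} satisfy the structure equations \eqref{2.3.3} precisely on solutions, with $\omega_1\wedge\omega_2\neq0$. For the analytic equivalence, I would start from a function $u\in\mathcal{B}=C^0(H^4,[0,T))\cap C^1(H^3,[0,T))$. By Lemma \ref{lema2.1} (Sobolev embedding), since $4>3+1/2$, such a $u$ is $C^3$ in $x$ and $C^1$ in $t$, so all the spatial and temporal derivatives appearing in \eqref{1.0.4} are genuine continuous functions; this already places $u$ in the $C^{3,1}$ class where the local form makes classical sense. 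The key operator identity is that $\Lambda^2=1-\partial_x^2$ and $\Lambda^{-2}=G\ast(\cdot)$ are mutually inverse on the relevant Sobolev scale (stated in the excerpt: $\Lambda^t:H^s\to H^{s-t}$ is a unitary isomorphism and $\partial_x$ commutes with $\Lambda^{-2}$). So I would apply $\Lambda^2$ to the non-local form \eqref{2.1.1}: using $\Lambda^2\partial_x\Lambda^{-2}=\partial_x$ and $\Lambda^2(uu_x)=\Lambda^2(\tfrac12\partial_x u^2)$, a direct computation collapses $\Lambda^2\big(u_t+uu_x+\tfrac32\partial_x\Lambda^{-2}(u^2)\big)=0$ into $u_t-u_{txx}+4uu_x-3u_xu_{xx}-uu_{xxx}=0$, which is \eqref{1.0.4}. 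The converse direction applies $\Lambda^{-2}$ to \eqref{1.0.4}; here one must check that each term lies in a space where $\Lambda^{-2}$ is legitimately applied and that $\Lambda^{-2}\Lambda^2=\mathrm{id}$ holds — this is exactly guaranteed by $u\in\mathcal{B}$, since $u-u_{xx}\in H^2$ and the nonlinearities remain in $L^2$-based spaces.

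For the geometric statement I would treat the two forms uniformly once equivalence is established: on $\mathcal{B}$ the two equations have the same solution set, so it suffices to verify \eqref{2.3.3} for a $\mathcal{B}$-solution. The plan is to compute $d\omega_i$ for the forms in \eqref{1.0.12} directly. Writing $\omega_i=f_{i1}dx+f_{i2}dt$ with $f_{11}=u-u_{xx}$, $f_{12}=u_x^2-2uu_x+uu_{xx}$, and the analogous $\mu$-dependent entries for $\omega_2,\omega_3$, I would compute $d\omega_i=(\partial_x f_{i2}-\partial_t f_{i1})\,dx\wedge dt$ and the wedge products on the right-hand sides of \eqref{2.3.3}, which are also multiples of $dx\wedge dt$. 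Matching coefficients reduces each structure equation to a scalar identity in $u$ and its derivatives. Following the Remark after Definition \ref{def2.1}, the residual $\mu_i$ must vanish on solutions; I expect the combination $\partial_x f_{12}-\partial_t f_{11}$ to reproduce exactly $u_t-u_{txx}+4uu_x-3u_xu_{xx}-uu_{xxx}$ (the local DP expression), so that \eqref{2.3.3} holds if and only if \eqref{1.0.4} is satisfied — this is the ``necessary and sufficient'' clause of Definition \ref{def2.1}. The proportional structure of \eqref{1.0.12} (the $t$-components of $\omega_2,\omega_3$ are constant multiples $\mu$ and $\pm\sqrt{1+\mu^2}$ of the $t$-component of $\omega_1$) is what makes these three scalar identities collapse onto the single DP equation.

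The regularity bookkeeping is the genuinely delicate part, and I expect it to be the main obstacle. The entries $f_{ij}$ involve $u_{xx}$ and products like $uu_{xx}$; for $d\omega_i$ to be a continuous $2$-form one needs $\partial_x f_{i2}$ and $\partial_t f_{i1}$ to exist as continuous functions. Since $f_{i1}$ depends on $u_{xx}$, the term $\partial_t f_{i1}=\partial_t(u-u_{xx})=u_t-u_{txx}$ requires $u_{txx}$ to be continuous, and $\partial_x f_{i2}$ requires a third spatial derivative of $u$; thus I must verify that $u\in\mathcal{B}$ supplies exactly $C^1$ one-forms and no more — which is why the theorem asserts class $C^1$ rather than smoothness. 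Concretely, $u\in C^0(H^4)$ gives $u,u_x,u_{xx},u_{xxx}$ continuous via embedding, and $u\in C^1(H^3)$ gives $u_t\in C^0(H^3)\hookrightarrow C^2$, so $u_t,u_{tx},u_{txx}$ are continuous; these are precisely the derivatives needed, and no fourth derivative enters the forms, so the $f_{ij}$ are $C^1$ but the surface structure requires no higher regularity. Once this is confirmed, the final clause $\omega_1\wedge\omega_2\neq0$ follows from $\omega_1\wedge\omega_2=f_{11}f_{22}-f_{12}f_{21}$; substituting \eqref{1.0.12} I expect this to reduce to $\pm2\sqrt{1+\mu^2}\,(u_x^2-2uu_x+uu_{xx})$ or a similarly nonvanishing expression, so that the nondegeneracy holds on the open set where the relevant combination of $u$ and its derivatives is nonzero, completing the verification that \eqref{1.0.4} and \eqref{2.1.1} describe the same $C^1$ PSS modelled by $\mathcal{B}$.
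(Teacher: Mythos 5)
Your proposal follows essentially the same route as the paper: the paper also establishes the equivalence via the identity $(1-\p_x^2)\big(v_t+vv_x+\f{3}{2}\p_x\Lambda^{-2}(v^2)\big)=v_t-v_{txx}+4vv_x-3v_xv_{xx}-vv_{xxx}$ on ${\cal B}$, then verifies by direct computation that $d\omega_1-\omega_3\wedge\omega_2$ and $d\omega_3-\omega_1\wedge\omega_2$ are (up to sign) this same DP expression times $dx\wedge dt$ while $d\omega_2-\omega_1\wedge\omega_3=0$, and finally notes $\omega_1\wedge\omega_2=\mp2\sqrt{1+\mu^2}(u_x^2-2uu_x+uu_{xx})\,dx\wedge dt$ is not identically zero. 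Your regularity bookkeeping corresponds exactly to the paper's preliminary observation that Sobolev embedding gives ${\cal B}\subseteq C^{3,1}(\R\times[0,T))$, so the argument is correct and matches the paper's proof.
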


In view of the equivalence of solutions in the class ${\cal B}$, henceforth wherever we consider a solution for any of the forms of the DP equation, we simply refer to it as {\it solution}, without distinction between the two possible forms of the equation.

\begin{corollary}\label{cor2.1}
    Let $u$ be a solution of the DP equation. Then the first fundamental form defined by \eqref{1.0.12} is given by
    \bb\label{2.3.4}
    \ba{lcl}
    g&=&\ds{\Big[(1+\mu^2)(u-u_{xx})^2+4(1+\mu^2)\pm4\mu\sqrt{1+\mu^2}(u-u_{xx})\Big]}dx^2\\
    \\
    &+&\ds{2\Big[(1+\mu^2)(u-u_{xx})\pm 2\mu\sqrt{1+\mu^2}\Big](u_x^2-2uu_x+uu_{xx})dxdt}\\
    \\
    &+&\ds{(1+\mu^2)(u_x^2-2uu_x+uu_{xx})^2dt^2.}
    \ea
    \ee
\end{corollary}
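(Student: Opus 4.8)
The plan is to compute the metric directly from its definition. By equation \eqref{2.2.3}, the first fundamental form is $g = \omega_1^2 + \omega_2^2$, where the one-forms $\omega_1, \omega_2$ are precisely those given in \eqref{1.0.12}. So the entire statement reduces to a substitution-and-expansion exercise: I would insert the explicit expressions for $\omega_1$ and $\omega_2$ into $\omega_1^2 + \omega_2^2$ and collect terms according to the basis $dx^2$, $dx\,dt$, and $dt^2$ of symmetric $2$-tensors (using the stated convention $\alpha\beta = \alpha\otimes\beta$ and $\alpha^2 = \alpha\alpha$).

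Concretely, I would introduce shorthand to keep the algebra manageable: let $P := u - u_{xx}$ and $Q := u_x^2 - 2uu_x + uu_{xx}$, so that from \eqref{1.0.12} we have $\omega_1 = P\,dx + Q\,dt$ and $\omega_2 = \bigl(\mu P \pm 2\sqrt{1+\mu^2}\bigr)dx + \mu Q\,dt$. First I would square $\omega_1$, obtaining the contributions $P^2\,dx^2 + 2PQ\,dx\,dt + Q^2\,dt^2$. Then I would square $\omega_2$: its $dx$-coefficient squares to $\mu^2 P^2 \pm 4\mu\sqrt{1+\mu^2}\,P + 4(1+\mu^2)$, the $dt$-coefficient squares to $\mu^2 Q^2$, and the cross term contributes $2\bigl(\mu P \pm 2\sqrt{1+\mu^2}\bigr)\mu Q\,dx\,dt$. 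Finally I would add the two results and group by coefficient.

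The grouping is where the claimed formula falls out. The $dx^2$ coefficient becomes $P^2 + \mu^2 P^2 \pm 4\mu\sqrt{1+\mu^2}\,P + 4(1+\mu^2) = (1+\mu^2)P^2 \pm 4\mu\sqrt{1+\mu^2}\,P + 4(1+\mu^2)$, matching \eqref{2.3.4} after substituting back $P = u - u_{xx}$. The $dt^2$ coefficient is $Q^2 + \mu^2 Q^2 = (1+\mu^2)Q^2$, again matching. For the $dx\,dt$ coefficient, I collect $2PQ + 2\mu^2 PQ \pm 4\mu\sqrt{1+\mu^2}\,Q = 2\bigl[(1+\mu^2)P \pm 2\mu\sqrt{1+\mu^2}\bigr]Q$, which is exactly the middle line of \eqref{2.3.4} once $P$ and $Q$ are restored.

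There is essentially no conceptual obstacle here; the result is a purely mechanical consequence of expanding $\omega_1^2 + \omega_2^2$, and the only point requiring a modest amount of care is the bookkeeping of the cross terms and the consistent treatment of the $\pm$ sign (the two sign choices must be carried through uniformly, and they do not interfere because the sign only enters linearly in the terms containing $\sqrt{1+\mu^2}$). I would note that the validity of the computation presupposes that $u$ lies in the class $\mathcal{B}$ of Theorem \ref{teo2.1}, so that $u - u_{xx}$ and $u_x^2 - 2uu_x + uu_{xx}$ are well-defined and the one-forms \eqref{1.0.12} are genuinely $C^1$; under that hypothesis the expansion above is justified and yields \eqref{2.3.4}.
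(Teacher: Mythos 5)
Your proposal is correct and takes essentially the same route as the paper, whose entire proof reads that the corollary ``is immediate and follows from \eqref{1.0.12} and \eqref{1.0.9}'': your substitution of the one-forms into $g=\omega_1^2+\omega_2^2$ and collection of the $dx^2$, $dx\,dt$, $dt^2$ coefficients is precisely the mechanical expansion the paper leaves implicit, and your coefficients match \eqref{2.3.4} exactly. The closing remark that $u\in{\cal B}$ guarantees the forms are well-defined and $C^1$ is a harmless (and reasonable) addition, not a departure.
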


While theorem \ref{teo2.1} concerns to a structural aspect of the equation (in the sense both forms may describe the same geometric object), our next result is of more qualitative nature and perhaps more remarkable, since it states that any non-trivial initial datum in $H^4(\R)$ necessarily endows some domain of $\R^2$ with the structure of a PSS. By non-trivial we mean a function that does not vanish identically.

\begin{theorem}{\tt (Existence of PSS on a strip of finite height)}\label{teo2.2} Let $u_0\in H^4(\R)$ be a non-trivial initial datum. Then there exists a strip ${\cal S}$, uniquely determined by $u_0$, such that:
\begin{enumerate}\letra
    \item The one-forms \eqref{1.0.12} are defined on ${\cal S}$, where $u$ is the unique solution of the Cauchy problem
$$
\left\{\ba{l}
\ds{u_t+uu_u=-\p_x(1-\p_x^2)^{-2}\Big(\f{3}{2}u^2} \Big),\\
\\
u(x,0)=u_0(x).
\ea\right.
$$
    \item There exists at least one simply connected set ${\cal C}\subseteq{\cal S}$ such that $\omega_1\wedge\omega_2\big|_{p}\neq0$, for every $p\in{\cal C}$. As a result, ${\cal C}$ has the structure of a $C^1-$PSS modelled by ${\cal B}\subseteq C^{3,1}(\R\times[0,T))$, where $T>0$ is determined by $u_0$.
\end{enumerate}
\end{theorem}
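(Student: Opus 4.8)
The plan is to treat (a) as essentially a well-posedness and Sobolev-embedding statement and to concentrate the real work on (b), namely the proof that the two-form $\omega_1\wedge\omega_2$ cannot degenerate everywhere when $u_0\not\equiv0$. For (a) I would first invoke the local well-posedness of the Cauchy problem for the non-local equation \eqref{2.1.1}: given $u_0\in H^4(\R)$ there is a maximal existence time $T=T(u_0)>0$ and a unique solution $u\in C^0([0,T);H^4(\R))\cap C^1([0,T);H^3(\R))=\B$; uniqueness makes $T$, hence the strip ${\cal S}:=\R\times[0,T)$, depend only on $u_0$. The coefficients $f_{ij}$ of \eqref{1.0.12} are polynomials in $u,u_x,u_{xx}$, and differentiating them in $x$ and $t$ brings in at most $u_{xxx}$ and $u_t,u_{xt},u_{xxt}$. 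By Lemma \ref{lema2.1}, $H^4\hookrightarrow C^3$ and $H^3\hookrightarrow C^2$, so all these quantities are continuous on ${\cal S}$; this simultaneously gives $\B\subseteq C^{3,1}(\R\times[0,T))$ and shows each $f_{ij}$ is $C^1$, so the forms are well defined and $C^1$ on ${\cal S}$. The equivalence of the two forms of the equation and the fact that \eqref{1.0.12} satisfies the structure equations \eqref{2.3.3} on solutions are precisely Theorem \ref{teo2.1}, which I would cite rather than reprove.

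The core of the theorem is (b). Writing $\omega_i=f_{i1}dx+f_{i2}dt$, a direct expansion shows that the $\mu(u-u_{xx})$ contributions cancel and
\bb
\omega_1\wedge\omega_2=\mp\,2\sqrt{1+\mu^2}\,\big(u_x^2-2uu_x+uu_{xx}\big)\,dx\wedge dt,
\ee
so that $\omega_1\wedge\omega_2\neq0$ is equivalent to the non-vanishing of $F:=u_x^2-2uu_x+uu_{xx}$. The observation that unlocks the argument is the factorisation $F=\p_x\big(uu_x-u^2\big)=\p_x\big(u(u_x-u)\big)$. I would evaluate at $t=0$ and argue by contradiction: if $F(\cdot,0)\equiv0$ then $u_0(u_0'-u_0)$ is constant in $x$, and since $u_0\in H^4(\R)$ and $u_0'\in H^3(\R)$ both vanish at $\pm\infty$, that constant is $0$; hence $u_0(u_0'-u_0)\equiv0$. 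On every connected component of $\{u_0\neq0\}$ this forces $u_0'=u_0$, i.e. $u_0=Ce^{x}$, which can neither match the vanishing of $u_0$ at a finite boundary point of the component nor decay at an infinite end unless the component is empty. Thus $u_0\equiv0$, contradicting non-triviality, and there exists $x_0$ with $F(x_0,0)\neq0$.

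To finish, I would use the continuity of $F$ on ${\cal S}$ to select $t_0\in(0,T)$ small and an open ball ${\cal C}\subseteq\R\times(0,T)$ near $(x_0,t_0)$ on which $F\neq0$; such a ${\cal C}$ is open and simply connected and satisfies $\omega_1\wedge\omega_2\big|_p\neq0$ for all $p\in{\cal C}$. Since Theorem \ref{teo2.1} guarantees the structure equations \eqref{2.3.3} on the solution $u$, Definition \ref{def2.1} then endows ${\cal C}$ with the structure of a $C^1$-PSS modelled by $\B$, which is the assertion of (b). I expect the non-vanishing step to be the main obstacle: one has to exclude the possibility that a non-trivial profile degenerates the coframe throughout the strip, and the clean route is the factorisation of $F$ combined with the decay built into $H^4(\R)$; attacking the pointwise relation $u_0u_{0,xx}=2u_0u_{0,x}-u_{0,x}^2$ directly would be considerably more delicate.
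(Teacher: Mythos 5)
Your proposal is correct, and its skeleton matches the paper's: part (a) is handled by the same well-posedness result (Lemma \ref{lema3.1} with $s=4$) plus Sobolev embedding and a citation of Theorem \ref{teo2.1}, while part (b) hinges on the same identity $\omega_1\wedge\omega_2=\mp2\sqrt{1+\mu^2}\,F\,dx\wedge dt$ with $F=u_x^2-2uu_x+uu_{xx}=\p_x(uu_x-u^2)$, the decay of Sobolev functions killing the integration constant, and the incompatibility of an exponential profile with $L^2(\R)$. Where you genuinely diverge is in where the non-degeneracy argument runs: the paper supposes $F\equiv0$ on the \emph{whole strip}, integrates the constraint for every $t$ to get $uu_x-u^2=f(t)$, concludes $f\equiv0$ and $u(x,t)=h(t)e^x$, and contradicts $u_0\in L^2(\R)$; you instead work only at $t=0$ with the initial datum, deduce $u_0(u_0'-u_0)\equiv0$, rule out $u_0=Ce^x$ component by component on $\{u_0\neq0\}$, and then propagate $F(x_0,0)\neq0$ into the open strip via continuity of $F$ on $\R\times[0,T)$. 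Your variant is a bit more elementary (it uses no property of the solution at positive times) and is more careful about the division by $u$, which the paper glosses over when it ``easily integrates'' $uu_x=u^2$. A second point in your favour: you take ${\cal C}$ to be an open ball, which is automatically simply connected, whereas the paper takes the connected component of ${\cal S}\setminus\{F=0\}$ containing a ball --- an open connected subset of $\R^2$ that is not obviously simply connected --- so your final step is actually tighter than the paper's, at the cost of producing a smaller (non-maximal) set, which the statement does not require. Two cosmetic remarks: the paper's strip is $\R\times(0,T)$ rather than your $\R\times[0,T)$, and your phrase about failing to ``decay at an infinite end'' should be restricted to the right end of each component (at $-\infty$ the profile $Ce^x$ does decay; the contradiction always comes from the right endpoint, finite or infinite), but since every component has a right end the argument stands.
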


The second part of the last theorem proves the following result.

\begin{corollary}\label{cor2.2}
Any solution of the DP equation subject to an initial datum $u_0\in H^4(\R)$ is a generic solution in the sense of Definition \ref{def2.2}.
\end{corollary}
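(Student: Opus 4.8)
The plan is to read off all four requirements of Definition \ref{def2.2} directly from Theorems \ref{teo2.1} and \ref{teo2.2}, taking the ambient domain $U$ of the generic solution to be the strip ${\cal S}$ that Theorem \ref{teo2.2} produces from the datum $u_0$. Since both theorems are already at our disposal, the corollary reduces to matching their conclusions against items (a)--(d) of the definition, supplemented by one short computation of the wedge $\omega_1\wedge\omega_2$. Concretely, I would fix a non-trivial $u_0\in H^4(\R)$, let $u$ be the associated solution, and then check the four items one at a time.

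First I would dispatch conditions (a) and (b). The hypotheses of Theorem \ref{teo2.2} already furnish, for any such $u_0$, a unique solution $u$ of the Cauchy problem belonging to ${\cal B}=C^{0}(H^{4}(\R),[0,T))\cap C^{1}(H^{3}(\R),[0,T))$; membership in ${\cal B}$ is precisely (a), and being a solution is (b). By Theorem \ref{teo2.1} this same $u$ solves the local and the non-local forms interchangeably, so no ambiguity arises from working with either formulation, and the forms \eqref{1.0.12} attached to $u$ are unambiguously defined on ${\cal S}$.

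Next, condition (c) asks that the one-forms \eqref{1.0.12} be $C^1$ on $U={\cal S}$. This is exactly what Theorem \ref{teo2.1} asserts for solutions in ${\cal B}$, namely that \eqref{1.0.4} and \eqref{2.1.1} describe PSS \emph{of class $C^1$} with coframe \eqref{1.0.12}. Concretely, the coefficients of the forms are polynomials in $u,u_x,u_{xx}$; the Sobolev embedding of Lemma \ref{lema2.1} gives $u(\cdot,t)\in C^{3}(\R)$ (since $4>3+1/2$), while the $C^1$ time regularity built into ${\cal B}$ controls the $t$-derivatives of these coefficients, so each coefficient, and hence each $\omega_i$, is $C^1$ in $(x,t)$ on ${\cal S}$.

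It remains to verify (d), which is the only substantive point and is supplied verbatim by the second part of Theorem \ref{teo2.2}: there is a simply connected open set ${\cal C}\subseteq{\cal S}$ on which $\omega_1\wedge\omega_2\neq0$, so one simply takes $\Omega={\cal C}$. The bookkeeping alluded to above is that, writing $A=u-u_{xx}$ and $B=u_x^2-2uu_x+uu_{xx}$, a direct expansion of \eqref{1.0.12} yields $\omega_1\wedge\omega_2=\mp\,2\sqrt{1+\mu^2}\,B\,dx\wedge dt$, so the non-degeneracy condition is equivalent to $B\not\equiv0$ on ${\cal C}$. With (a)--(d) all established, $u$ is a generic solution. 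I expect no genuine obstacle at the level of the corollary: the hard part---showing that a non-trivial datum cannot force $B$ to vanish identically, and locating the strip---has already been resolved inside Theorem \ref{teo2.2}, and the only thing demanding care here is the consistency of the regularity class and of the domain $U={\cal S}$ across Theorems \ref{teo2.1} and \ref{teo2.2}.
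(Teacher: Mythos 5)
Your proposal is correct and follows essentially the same route as the paper, which dispenses with an explicit proof by noting that the second part of Theorem \ref{teo2.2} (together with the regularity class ${\cal B}$ and the wedge computation \eqref{4.0.4} from the proof of Theorem \ref{teo2.1}) verifies all four items of Definition \ref{def2.2} with $U={\cal S}$ and $\Omega={\cal C}$. Your write-up merely makes that bookkeeping explicit, which matches the paper's intent.
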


In the proof of Theorem \ref{teo2.2} we will see that ${\cal S}=\R\times(0,T)$, where $T>0$ is uniquely determined by $u_0$. While the coordinate $t$ may be restrict to some range of values, we do not have any sort of restriction with respect to $x$. Our next result takes a look on asymptotic behaviour of the metric determined by \eqref{1.0.12} for $|x|\rightarrow\infty$. Below $\pi_1$ denotes the canonical projection on the first variable, and
$$
\supp(f)=\overline{\{x\in\R;\,\,f(x)\neq0\}}
$$
denotes the support of the function $f$.

\begin{theorem}{\tt (Asymptotic behaviour of metrics)}\label{teo2.3}
Suppose that $u_0\in H^4(\R)$ is a non-trivial, compactly supported initial datum, with $[a,b]:=\supp(u_0)$, and ${\cal S}$ be the strip determined by theorem \ref{teo2.2}. Then there exist two curves $\gamma_+,\gamma_-:[0,T)\rightarrow\overline{{\cal S}}$, for some $T>0$ determined by $u_0$, such that:
\begin{enumerate}\letra
    \item $\pi_1(\gamma_-(0))=a$ and $\pi_1(\gamma_+(0))=b$;
    \item $\gamma_\pm'(t)\neq 0$, $t>0$;
    \item $\pi_1(\gamma_-(t))<\pi_1(\gamma_+(t))$, for any $t\in[0,T)$;
    \item On the right of $\gamma_+$, the first fundamental form is given by
    \bb\label{2.3.5}
    g=4(1+\mu^2)dx^2\pm4\mu\sqrt{1+\mu^2}E(t)^2 e^{-2x}dxdt+(1+\mu^2)E(t)^4 e^{-4x}dt,
    \ee
    where
    \bb\label{2.3.6}
    E(t)=\int_{\pi_1(\gamma_-(t))}^{\pi_1(\gamma_+(t))}e^{- x} m(x,t)dx;
    \ee
    \item On the left of $\gamma_-$ we no longer have a metric defined on ${\cal S}$, though the forms $\omega_1$ and $\omega_2$ are still defined everywhere.
\end{enumerate}    
\end{theorem}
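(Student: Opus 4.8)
\section*{Proof proposal for Theorem \ref{teo2.3}}

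The plan is to reduce everything to the transport structure of the Degasperis--Procesi momentum together with the convolution representation $u=G\ast m$, where $m:=u-u_{xx}$ and $G(x)=\tfrac12 e^{-|x|}$. First I would record that applying $\Lambda^2=1-\partial_x^2$ to \eqref{2.1.1} yields the transport law
$$
m_t+u m_x+3u_x m=0,
$$
so that $m$ is advected along the characteristics of the velocity field $u$. Concretely, I introduce the flow $q=q(x,t)$ solving $q_t=u(q,t)$, $q(x,0)=x$. Since $u\in{\cal B}\subseteq C^{3,1}(\R\times[0,T))$, both $u$ and $u_x$ are continuous and bounded on $t$-compact sets, so by standard ODE theory $q$ exists for all $x\in\R$ and $t\in[0,T)$; moreover $q_x=\exp\big(\int_0^t u_x(q,s)\,ds\big)>0$, whence $x\mapsto q(x,t)$ is an orientation-preserving diffeomorphism of $\R$ for each $t$.

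I then define the two curves by transporting the endpoints of $\supp(u_0)$, namely $\gamma_-(t)=(q(a,t),t)$ and $\gamma_+(t)=(q(b,t),t)$. Items (a)--(c) then follow at once: (a) holds because $q(\cdot,0)=\mathrm{id}$; (b) holds because $\gamma_\pm'(t)=(u(q,t),1)$ has second component $1$ and so never vanishes; and (c) holds because $a<b$ (as $u_0\not\equiv 0$ with $\supp u_0=[a,b]$) combined with the monotonicity of $q(\cdot,t)$ gives $q(a,t)<q(b,t)$ for all $t$. The structural fact used next is support propagation: integrating the characteristic ODE for $m$ gives $m(q(x,t),t)=m_0(x)\exp\big(-3\int_0^t u_x(q,s)\,ds\big)$, so $\supp m(\cdot,t)=q(\supp m_0,t)\subseteq[q(a,t),q(b,t)]$, using $\supp m_0\subseteq\supp u_0=[a,b]$.

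The heart of the argument is the evaluation of $u=G\ast m$ outside this moving support. For $x>\pi_1(\gamma_+(t))$ one has $x-y>0$ for every $y$ in the support, so factoring $e^{-x}$ out of the kernel leaves an $x$-independent coefficient and $u(x,t)=\tfrac12 E(t)e^{-x}$ with $E(t)$ as in \eqref{2.3.6}. There $u_x=-u$ and $u_{xx}=u$, so $m=0$ (confirming consistency) and, crucially, $u_x^2-2uu_x+uu_{xx}=(u_x-u)^2=4u^2=E(t)^2e^{-2x}$. Substituting $m=0$ and this value into the metric \eqref{2.3.4} of Corollary \ref{cor2.1} collapses the $dx^2$ coefficient to $4(1+\mu^2)$ and reproduces exactly \eqref{2.3.5}, proving (d). For $x<\pi_1(\gamma_-(t))$ the same computation gives $u\sim\tfrac12 e^{x}\!\int e^{-y}m\,dy$, so that $u_x=u$, $u_{xx}=u$, and therefore $u_x^2-2uu_x+uu_{xx}=(u_x-u)^2=0$. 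Hence on the left the $dt$-components of all three forms vanish and, since $m=0$ there too, $\omega_1\equiv 0$ while $\omega_2=\pm 2\sqrt{1+\mu^2}\,dx$; thus $\omega_1\wedge\omega_2\equiv 0$ and $g=\omega_1^2+\omega_2^2=4(1+\mu^2)\,dx^2$ is of rank one. This degenerate tensor is not a metric on a two-dimensional region, which is precisely assertion (e), and the computation also shows that $\omega_1,\omega_2$ remain globally defined.

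The step I expect to be most delicate is not the algebra but the analytic bookkeeping behind support propagation under finite regularity: one must verify rigorously that $q(\cdot,t)$ is a genuine diffeomorphism on $[0,T)$ and that $\supp m(\cdot,t)$ is exactly $q(\supp m_0,t)$, so that $m$ truly vanishes on both exterior regions. The conceptual crux, and what makes (d) and (e) asymmetric, is the identity that $(u_x-u)^2$ governs the $dt$-components once $m=0$: it equals $4u^2$ on the right, where $u$ is the decaying exponential $e^{-x}$, but vanishes identically on the left, where $u$ is the growing exponential $e^{x}$ with $u_x=u$. This single observation simultaneously produces the explicit right-hand metric \eqref{2.3.5} and the collapse of $\omega_1$, hence of the first fundamental form, on the left.
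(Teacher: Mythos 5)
Your proposal is correct and follows essentially the same skeleton as the paper's proof: both define $\gamma_\pm(t)=(q(a,t),t)$ and $(q(b,t),t)$ by flowing the endpoints of $\supp(u_0)$ under the characteristic flow of Theorem \ref{teo3.1}, both reduce items (d)--(e) to the exponential tail profiles of $u$ outside the moving interval $[q(a,t),q(b,t)]$, and the final substitution into \eqref{2.3.4} is identical. The one real difference is how the tail profiles are obtained: the paper simply cites Henry's persistence theorem \cite[Theorem 2.5]{henry}, whereas you derive them self-containedly from the momentum transport law $m_t+um_x+3u_xm=0$ --- your identity $m(q(x,t),t)=m_0(x)\exp\bigl(-3\int_0^t u_x(q,s)\,ds\bigr)$ is precisely item (c) of the paper's Theorem \ref{teo3.1} --- combined with $u=G\ast m$, the inclusion $\supp m_0\subseteq \supp u_0$, and the fact that $q(\cdot,t)$ is a diffeomorphism, so that $\supp m(\cdot,t)=q(\supp m_0,t)$ exactly; this makes the proof internal to the paper at the cost of the bookkeeping you correctly single out as the delicate step. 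Your packaging of the algebra via $u_x^2-2uu_x+uu_{xx}=(u_x-u)^2$ wherever $m=0$, giving $4u^2$ on the right tail and $0$ on the left, is a cleaner unification than the paper's direct evaluation. One caveat: factoring $e^{-x}$ out of the kernel $\tfrac12 e^{-|x-y|}$ for $x>y$ leaves the weight $e^{+y}$, so the right-tail coefficient is $\int e^{+y}m(y,t)\,dy$; equation \eqref{2.3.6} as printed (weight $e^{-x}$) therefore carries a sign typo, which the paper's own proof (its $E_\pm$ with weight $e^{\mp x}$) contains as well, and your phrase ``with $E(t)$ as in \eqref{2.3.6}'' inherits it --- harmless to the structure of the argument, but worth correcting.
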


Denoting by $(g)$ the matrix of the first fundamental form, from \eqref{2.3.5} we have
$$
(g)=\begin{pmatrix}
4(1+\mu^2)& \pm2\mu\sqrt{1+\mu^2}E(t)^2 e^{-2x}\\
\\
\pm2\mu\sqrt{1+\mu^2}E(t)^2 e^{-2x}& (1+\mu^2)E(t)^4 e^{-4x}
\end{pmatrix}=
\begin{pmatrix}
4(1+\mu^2)& 0\\
\\
0& 0
\end{pmatrix}+O(e^{-2x}),
$$
that is, the matrix of the metric $g$ is an $O(e^{-2x})$ perturbation of the singular matrix $\Diag(4(1+\mu^2),0)$ as $x\rightarrow+\infty$, showing that the metric becomes asymptotically singular on the right side of the strip ${\cal S}$, for each fixed $t\in(0,T)$.

Not so long ago, the literature on PSS equations was primarily focused on abstract aspects of surfaces determined by these equations, paying little to no attention to how these surfaces are immersed in the three-dimensional Euclidean space. This becomes an intriguing question because it has long been known that complete realisation of such surfaces in the usual Euclidean space is impossible due to Hilbert's theorem, which forbids the isometric immersion of a complete surface with negative curvature into $\R^3$ \cite[page 439]{neil}, \cite{milnor}, \cite[section 5-11]{carmo}. Consequently, the only viable way to realise these surfaces is through local isometric immersions.

The problem of immersing PSS surfaces determined by solutions of PSS equations emerged fairly recently in a series of papers by Kahouadji, Kamran, and Tenenblat \cite{kah-book, kah-cag, kah}. Later on, Castro Silva and Kamran \cite{tarcisio} addressed the immersion problem of PSS determined by a class of equations studied in \cite{keti2015}.

The findings reported in \cite{kah-book, kah-cag, kah, tarcisio} strongly suggest that the second fundamental forms of PSS equations are most likely either independent of the solutions of the equations (universal) or dependent on an infinite number of derivatives of the dependent variable. There are very few examples of equations that lie between these two extremes, where the second fundamental form depends only on a finite number of the dependent variable and its derivatives. In particular, so far there is no equation of the type \eqref{1.0.11} for which a known second fundamental form depends on $u$ or its derivatives up some finite order.

The results reported in \cite{tarcisio} show that the situation for the equations studied in \cite{keti2015} is not better. In fact, none of the families classified in \cite{keti2015} have a second fundamental form depending on a finite jet. However, the Degasperis-Procesi (DP) equation is a PSS equation \cite[Example 2.8]{keti2015} and has a universal second fundamental form \cite[Theorem 1.1]{tarcisio}, that is, its coefficients do not depend neither on $u$, nor on its derivatives of any other.

Our next result establishes a link between immersions and Cauchy problems.

\begin{theorem}{\tt (Existence of a second fundamental form)}\label{teo2.4}
Let $u_0\in H^4(\R)$ be a non-trivial initial datum, ${\cal S}$ be the strip determined by theorem \ref{teo2.2} and $U$ be a simply connected component of ${\cal S}$ for which the forms $\omega_1$ and $\omega_2$ \eqref{1.0.12} are linearly independent. Then we can find smooth real valued functions $a,b,c$, locally defined on $U$, such that
\eqref{2.2.8} are connection forms that, jointly with $\omega_1$ and $\omega_2$, determine the first two fundamental forms of a PSS.
\end{theorem}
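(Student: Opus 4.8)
The plan is to build the two missing connection forms $\omega_{13},\omega_{23}$ explicitly and then apply the Bonnet theorem (Lemma \ref{lemma2.2}) on $U$. Because $\omega_1\wedge\omega_2\neq0$ throughout $U$, the Cartan lemma already forces any admissible pair to have the shape \eqref{2.2.8}; condition \eqref{2.2.6} is then automatic, and \eqref{2.2.1} holds since $\omega_{12}=\omega_3$ is the Levi-Civita form coming from \eqref{1.0.12}. Hence the task collapses to finding three functions $a,b,c$ for which the Gauss equation and the two Codazzi-Mainardi equations in \eqref{2.2.7} hold.

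The first step I would take is a structural reduction. Setting $m=u-u_{xx}$ and $Q=u_x^2-2uu_x+uu_{xx}$, the forms \eqref{1.0.12} become $\omega_1=m\,dx+Q\,dt$, while $\omega_2$ and $\omega_3$ are $\omega_1$ plus a $dx$-term; a direct check gives the key identity $\omega_3=\f{1}{\nu}\omega_1+\f{\mu}{\nu}\omega_2$ with $\nu=\pm\sqrt{1+\mu^2}$. Thus the connection form $\omega_{12}=\omega_3$ is a constant-coefficient combination of $\omega_1$ and $\omega_2$, which at once yields $d\omega_3=\omega_1\wedge\omega_2$ and reduces the Gauss equation $d\omega_{12}=-\omega_{13}\wedge\omega_{23}$ to the single algebraic constraint $ac-b^2=-1$, i.e. the curvature normalisation ${\cal K}=-1$.

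Seeking $a,b,c$ independent of $u$, I would next expand $d\omega_{13}=\omega_{12}\wedge\omega_{23}$ and $d\omega_{23}=\omega_{21}\wedge\omega_{13}$ in the frame $dx\wedge dt$; using $\omega_1\wedge\omega_2=-2\nu Q\,dx\wedge dt\neq0$ on $U$ and matching separately the coefficients of $m$, of $Q$, and of the constant term (independent as $u$ varies), the $m$- and constant-matchings force $a,b,c$ to depend on $x$ alone, while the $Q$-matching leaves the first-order system $a'+\mu b'=2(a-c)+4\mu b$ and $b'+\mu c'=2\mu(c-a)+4b$. Appending the differentiated constraint $a'c+ac'-2bb'=0$ gives a $3\times3$ linear system for $(a',b',c')$ whose coefficient determinant equals $a+2\mu b+\mu^2c$.

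Finally, I would start from a point of $\{ac-b^2=-1\}$ at which this determinant does not vanish—such points always exist, the determinant being a non-trivial linear function on a quadric—and invoke Picard-Lindel\"of to obtain smooth (indeed analytic) solutions $a,b,c$ on a neighbourhood. By construction they remain on $\{ac-b^2=-1\}$ and reproduce the universal second fundamental form of \cite[Theorem 1.1]{tarcisio}. Since $u\in{\cal B}$ renders $m$ and $Q$ of class $C^1$ in $(x,t)$, the five forms $\omega_1,\omega_2,\omega_{12},\omega_{13},\omega_{23}$ are all $C^1$, so Lemma \ref{lemma2.2} yields a local isometric immersion with first fundamental form \eqref{2.3.4} and second fundamental form $a\omega_1^2+2b\omega_1\omega_2+c\omega_2^2$. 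The main obstacle is the Codazzi-Mainardi bookkeeping together with the compatibility of this ODE system with the algebraic constraint $ac-b^2=-1$; it is precisely this compatibility, secured by the differentiated constraint being one of the three equations, that guarantees real-valued $a,b,c$ rather than merely complex ones.
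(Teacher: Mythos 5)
Your proposal is correct, but it reaches the conclusion by a genuinely different route from the paper. The paper's proof is essentially citation-based: it identifies the DP equation inside the family classified in \cite[Theorem 3.4]{keti2015} (parameters $\gamma=2$, $\lambda=1$, $\beta=0$, $m_1=2$, $m_2=0$) and then imports the universal coefficients $a,b,c$ from \cite[Proposition 3.7]{tarcisio} --- closed-form functions of $x$ when $\mu=0$, and for $\mu\neq0$ functions determined by an explicit first-order ODE for $b$ --- concluding local definability on $U$ by varying the constants $\sigma,b_0$ as in \cite[pages 36--37]{tarcisio}. You instead rederive existence from scratch, and your intermediate claims all check out: the relation $\omega_3=\f{1}{\nu}\omega_1+\f{\mu}{\nu}\omega_2$ with $\nu=\pm\sqrt{1+\mu^2}$ is a correct identity for \eqref{1.0.12}; the Gauss equation does reduce to $ac-b^2=-1$; and on solutions of the DP equation one has $\p_x Q-\p_t m=-2Q$ (with $m=u-u_{xx}$, $Q=u_x^2-2uu_x+uu_{xx}$), which is the identity that makes your bookkeeping close up, giving exactly $d\omega_{13}-\omega_{12}\wedge\omega_{23}=\big[(a'+\mu b')-2(a+\mu b)-2(\mu b-c)\big]Q\,dx\wedge dt$ and its companion, hence your two Codazzi ODEs $a'+\mu b'=2(a-c)+4\mu b$ and $b'+\mu c'=2\mu(c-a)+4b$, and the determinant $a+2\mu b+\mu^2 c$ of the augmented $3\times3$ system is as you state. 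Solving by Picard--Lindel\"of from an admissible initial point, with $ac-b^2=-1$ preserved because its derivative is one of the three equations, is sound, and since the initial abscissa $x_0$ is arbitrary you obtain connection forms locally at every point of $U$, which is all the theorem asserts; as a consistency check, at $\mu=0$ your system integrates precisely to the paper's quoted data ($b=Ce^{4x}$, $(a^2)'-4a^2=4-4b^2$, i.e. $a=\pm\sqrt{\sigma e^{2z}-b_0^2e^{4z}-1}$ with $z=2x$ and $c=a-da/dz$). What each approach buys: the paper gets explicit formulas and precise domains of validity for free from \cite{tarcisio}, whereas yours is self-contained, uniform in $\mu$, and makes transparent why the Gauss constraint propagates (it is a first integral of your ODE system). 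One cosmetic overstatement: your claim that coefficient matching \emph{forces} $a,b,c$ to depend on $x$ alone treats $m$ and $Q$ as independently variable, which is a classification-type assertion about all solutions; for the fixed solution of the Cauchy problem considered here only the sufficiency direction is needed, and that direction in your argument is airtight.
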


It is well known that solutions of the DP equation may blow up when $t$ approaches a certain value $T_0$, see \cite[Theorem 4.2]{liu2006}. Our next result tackle the problem of singularities of the metric. By a singularity of the metric we mean a situation in which either the first fundamental form is well defined, but no longer a definite positive bi-linear form, or it blows up. The first situation, in fact, is treated in the proof of Theorem \ref{teo2.2} and in Theorem \ref{teo2.3}. The second case is addressed right now.

\begin{theorem}{\tt (Blow up of metrics)}\label{teo2.5}
Let $u_0\in H^4(\R)$, $m_0=u_0-u_0''$, and ${\cal S}$ the corresponding strip determined by Theorem \ref{teo2.2}. In addition, assume the existence of a point $x_0$ such that:
\begin{enumerate}\letra
    \item $m_0(x)\leq 0$, for $x\ge x_0$;
    \item $m_0(x)\geq0$, for $x\leq x_0$.
\end{enumerate}

Under the conditions above, the metric determined by \eqref{1.0.12} blows up in finite height, in the following sense: there exists a curve $\gamma:[0,T_0)\rightarrow\R^2$, such that $\gamma(0)=x_0$, $T_0<\infty$, $\gamma(0,T_0)\subseteq{\cal S}$ and
$$
\lim_{t\nnearrow T_0}g_{22}(\gamma(t))=+\infty,
$$
for any value of $\mu$. In case $\mu\neq0$, we also have $g_{12}(\gamma(t))\rightarrow+\infty$ as $t\nnearrow T_0$.
\end{theorem}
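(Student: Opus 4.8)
The plan is to take for $\gamma$ the characteristic issuing from $x_0$ and to exploit that, along it, the momentum density $m=u-u_{xx}$ vanishes identically, which collapses the metric coefficients to a single term. Writing the DP equation in the transport form $m_t+um_x+3mu_x=0$ and letting $q=q(x,t)$ solve $\dot q=u(q,t)$, $q(x,0)=x$, one integrates to $m(q(x,t),t)=m_0(x)\exp\big(-3\int_0^t u_x(q(x,s),s)\,ds\big)$, so $m$ preserves its sign, and in particular its zeros, along characteristics. Hypotheses (a)--(b) force $m_0(x_0)=0$ (by continuity of $m_0\in H^2\subset C^0$), so the curve $\gamma(t)=(\xi(t),t)$ with $\xi(t)=q(x_0,t)$ satisfies $\gamma(0)=x_0$ and $m(\xi(t),t)\equiv0$. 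Using $u_{xx}=u-m$, the common factor in \eqref{2.3.4}, namely $Q:=u_x^2-2uu_x+uu_{xx}$, rewrites as $Q=(u-u_x)^2-um$, hence along $\gamma$ it reduces to $Q=(u-u_x)^2$. Therefore $g_{22}=(1+\mu^2)(u-u_x)^4$ and $g_{12}=\pm2\mu\sqrt{1+\mu^2}\,(u-u_x)^2$ on $\gamma$, and the theorem reduces to showing $(u-u_x)^2\to+\infty$ along $\gamma$ as $t\nearrow T_0$.

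For this divergence I would invoke the wave-breaking mechanism for DP recorded in \cite[Theorem 4.2]{liu2006}: under (a)--(b) the solution exists only up to a finite $T_0<\infty$, with $u_x\to-\infty$ precisely along the sign-change characteristic $\gamma$, while $\|u(t)\|_{L^\infty}$ stays bounded by a constant $C$ along the flow. To make the mechanism transparent I would differentiate the non-local form \eqref{2.1.1} in $x$ and use $\p_x^2\Lambda^{-2}=\Lambda^{-2}-1$ to obtain $u_{tx}+u_x^2+uu_{xx}=\f{3}{2}u^2-\f{3}{2}\,G\ast u^2$, where $G(x)=\f{e^{-|x|}}{2}$. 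Evaluating the material derivative $M(t):=u_x(\xi(t),t)$ then yields the Riccati-type identity $\dot M=-M^2+\f{3}{2}u^2-\f{3}{2}(G\ast u^2)$. Since $G\ast u^2\ge0$ and $\|u\|_{L^\infty}\le C$, this gives $\dot M\le -M^2+\tfrac32 C^2$, so $M\to-\infty$ in finite time once $M$ drops below $-\sqrt{3/2}\,C$. The sign relations at the sign-change point, $u-u_x=e^{-\xi}\int_{-\infty}^{\xi}e^y m\,dy\ge0$ and $u+u_x=e^{\xi}\int_{\xi}^{\infty}e^{-y}m\,dy\le0$, confirm $M\le-|u(\xi)|\le0$ and pin the blow-up on $\gamma$.

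Combining the two inputs, along $\gamma$ we have $u_x\to-\infty$ while $|u|\le C$, whence $u-u_x\ge-C-u_x\to+\infty$; thus $(u-u_x)^2\to+\infty$, giving $g_{22}=(1+\mu^2)(u-u_x)^4\to+\infty$ for every $\mu$, and $|g_{12}|=2|\mu|\sqrt{1+\mu^2}\,(u-u_x)^2\to+\infty$ when $\mu\neq0$ (the sign of $g_{12}$ being fixed by the branch chosen in \eqref{1.0.12}). Finiteness of the blow-up height and $\gamma((0,T_0))\subset\mathcal{S}=\R\times(0,T)$ follow because $T_0$ coincides with the maximal existence time $T$ of the solution in $\mathcal{B}$, the loss of the $H^4$-regularity being exactly the divergence of $u_x$.

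The main obstacle is the analytic step rather than the algebra. The naive pointwise estimate $u^2\le u_x^2=M^2$, which does hold at $\xi$, feeds the Riccati identity with the wrong sign ($\dot M\le\tfrac12 M^2$) and is useless for blow-up; the argument genuinely needs the \emph{uniform} $L^\infty$ bound on $u$ together with the fact that $M$ eventually crosses the threshold $-\sqrt{3/2}\,C$, which is the delicate content of \cite[Theorem 4.2]{liu2006}. A secondary subtlety is that it is $u-u_x$, not $u_x$ alone, that must diverge, so the uniform bound on $u$ --- preventing $u$ from trailing $u_x$ to $-\infty$ --- is indispensable for the geometric conclusion.
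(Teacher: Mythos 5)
Your proposal is correct in substance, and its geometric half coincides exactly with the paper's: the same characteristic curve $\gamma(t)=(q(x_0,t),t)$, the same appeal to Theorem \ref{teo3.1}(c) to get $m\equiv0$ along $\gamma$ (hypotheses (a)--(b) force $m_0(x_0)=0$ by continuity), and the same collapse $u_x^2-2uu_x+uu_{xx}=(u-u_x)^2$ there, reducing everything to the divergence of $(u-u_x)^2$ along $\gamma$. The difference is in the analytic core. The paper does not quote \cite[Theorem 4.2]{liu2006} as a black box: it re-proves the breakdown along $\gamma$ by setting $I(t)=(u+u_x)(\gamma(t))$ and $g(t)=(u-u_x)(\gamma(t))$, showing via the convolution estimates \eqref{7.0.3} (proved in the appendix) and \eqref{7.0.7} that $I$ is strictly decreasing with $I_0<0$ while $g$ is nondecreasing with $g(0)>0$, and then closing a Riccati-type inequality that yields the explicit height bound $T_0=-1/\big(I_0\,(u_0(x_0)-u_0'(x_0))^2\big)$. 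Your route imports the conclusion from \cite{liu2006} instead; be aware that you then need more than the \emph{statement} of that theorem: the localization ``$u_x\to-\infty$ along the characteristic through $x_0$'' and the boundedness of $\|u(\cdot,t)\|_{L^\infty}$ (a linear-in-$t$ bound, hence finite on $[0,T_0)$ with $T_0<\infty$) live in its proof and in a separate lemma of that paper, so either cite those specifically or reproduce the two differential inequalities for $u\pm u_x$ along $\gamma$ --- which is precisely what the paper does. What your approach buys is brevity; what the paper's buys is self-containedness, an explicit quantitative $T_0$, and a direct proof that $u-u_x$ itself diverges.

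One comparative point in your favour: your insistence that the uniform $L^\infty$ bound on $u$ is indispensable is well placed. The paper's closing step asserts $f'(t)\geq 2g\cdot\frac{u_x^2-u^2}{2}=-f(t)^2I(t)$ for $f=g^2$, but that product equals $-f(t)I(t)$ (an exponent slip), and $f'\geq(-I_0)f$ alone gives only exponential growth, not finite-time blow-up; the clean repair is the coupled Liu--Yin mechanism $h:=g\cdot(-I)$, for which $h'=g'(-I)+g(-I)'\geq \frac{h}{2}\big(g+(-I)\big)\geq h^{3/2}$, after which a bound on $|u|$ along the flow --- exactly the ingredient your argument makes explicit --- is the natural way to transfer the blow-up of $h$ to $u-u_x$ itself. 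Finally, note that, like the paper, you implicitly need $m_0\not\equiv0$ on both sides of $x_0$: otherwise $I_0=e^{x_0}\int_{x_0}^\infty e^{-z}m_0(z)\,dz=0$ or $(u_0-u_0')(x_0)=0$ and no breaking occurs (compare Theorem \ref{teo2.6}); the same nontriviality is implicit in \cite[Theorem 4.2]{liu2006}.
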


All the results considered until now have a local nature, in the sense that the strip determined in Theorem \ref{teo2.2} has a finite height. A natural question is: can we have the forms \eqref{1.0.12} defined over ${\cal S}=\R\times(0,\infty)$?
While theorem \ref{teo2.5} provides a negative answer under certain circumstances, a change of conditions drives us to a completely different scenario.

\begin{theorem}{\tt (Global domain for the first fundamental form)}\label{teo2.6} Assume that $m_0\in H^2(\R)\cap L^1(\R)$ is non-trivial and either non-negative or non-positive, and $u$ be the corresponding solution of the DP equation subject to $u(x,0)=u_0(x)$, where $u_0=G\ast m_0$, where $G$ is given by \eqref{2.1.2}. Then the one forms \eqref{1.0.12} are defined on ${\cal S}=\R\times(0,\infty)$.
\end{theorem}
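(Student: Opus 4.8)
The plan is to reduce the whole statement to a single analytic fact: that the maximal existence time of the solution is infinite. Since $m_0\in H^2(\R)$, the identity $u_0=G\ast m_0=\Lambda^{-2}m_0$ together with the mapping property $\Lambda^{-2}:H^2(\R)\to H^4(\R)$ places $u_0$ in $H^4(\R)$, so Theorem \ref{teo2.2} applies and furnishes a strip $\mathcal{S}=\R\times(0,T)$ on which the forms \eqref{1.0.12} live, with $T>0$ the maximal time of existence of the corresponding solution $u$ in the class $\mathcal{B}$. Because the coefficients of \eqref{1.0.12} are polynomial expressions in $u,u_x,u_{xx}$ (equivalently in $m=u-u_{xx}$ and in $u,u_x$), they are defined and of class $C^1$ exactly as long as $u$ persists in $\mathcal{B}$. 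Thus the theorem amounts to proving $T=+\infty$ under the sign hypothesis on $m_0$.

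First I would exploit the transport form of the DP equation for the momentum. Writing the equation as $m_t+um_x+3u_xm=0$ with $m=u-u_{xx}$, I would introduce the flow $q=q(x,t)$ defined by $\p_t q=u(q,t)$, $q(x,0)=x$, which is a smooth increasing diffeomorphism of $\R$ with $q_x=\exp\big(\int_0^t u_x(q,s)\,ds\big)>0$. A direct computation gives $\f{d}{dt}\big(m(q,t)\,q_x^{3}\big)=0$, hence $m(q(x,t),t)\,q_x(x,t)^3=m_0(x)$. Since $q_x>0$ and $q(\cdot,t)$ is onto, this shows the sign of $m(\cdot,t)$ is preserved on $[0,T)$: if $m_0\geq0$ then $m(\cdot,t)\geq0$, and likewise if $m_0\leq0$.

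Next I would establish the uniform bounds. Integrating the conservation form $m_t+\p_x(um)=-2u_xm$ and using $\int_\R u_x m\,dx=\int_\R(uu_x-u_xu_{xx})\,dx=0$ (the boundary terms vanish because $u,u_x\to0$ at spatial infinity for $H^4$ functions), I obtain that $\int_\R m\,dx$ is conserved. Combined with sign definiteness and the $L^1$ hypothesis this gives $\|m(\cdot,t)\|_{L^1}=\big|\int_\R m_0\,dx\big|=\|m_0\|_{L^1}$ for all $t\in[0,T)$. From $u=G\ast m$ and the explicit kernel one finds $u+u_x=\int_x^{\infty}e^{-(y-x)}m(y)\,dy$ and $u-u_x=\int_{-\infty}^{x}e^{-(x-y)}m(y)\,dy$; sign definiteness of $m$ then forces $|u_x|\le|u|$ pointwise, while $\|u\|_{L^\infty}\le\|G\|_{L^\infty}\|m\|_{L^1}=\tfrac12\|m_0\|_{L^1}$. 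Hence $\|u_x(\cdot,t)\|_{L^\infty}\le\tfrac12\|m_0\|_{L^1}$ uniformly on $[0,T)$.

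Finally I would invoke the continuation criterion for the DP equation (as in \cite{liu2006}), which follows from the energy estimate $\|u(\cdot,t)\|_{H^4}\le\|u_0\|_{H^4}\exp\big(C\int_0^t\|u_x(\cdot,\tau)\|_{L^\infty}\,d\tau\big)$. The uniform bound just obtained keeps the exponent at most $\tfrac{Ct}{2}\|m_0\|_{L^1}$, finite for every finite $t$, so the $H^4$ norm cannot blow up in finite time and $T=+\infty$. The solution therefore remains in $\mathcal{B}=C^0(H^4(\R),[0,\infty))\cap C^1(H^3(\R),[0,\infty))$, and Theorem \ref{teo2.1} guarantees that the forms \eqref{1.0.12} are $C^1$ on all of $\mathcal{S}=\R\times(0,\infty)$. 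The hardest and most delicate step is this last one: converting the a priori pointwise control of $u_x$ coming from sign definiteness into genuine global existence, which rests on the precise form of the DP continuation criterion rather than on any soft argument; equal care is needed to justify the integration-by-parts identities at spatial infinity and to ensure the characteristic flow and the conservation of $\int_\R m\,dx$ are legitimate at the $H^4$ regularity level.
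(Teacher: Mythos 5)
Your proposal is correct and takes essentially the same route as the paper: reduce the theorem to global existence of $u$ in ${\cal B}$, use sign preservation of $m$ along the characteristic flow (Theorem \ref{teo3.1}) together with conservation of $\int_\R m\,dx$ to get $\|m(\cdot,t)\|_{L^1}=\|m_0\|_{L^1}$, deduce a uniform $L^\infty$ bound on $u_x$, and close with a continuation criterion (the paper invokes \cite[Theorem 1.1]{freire-AML} with $n=4$, $b=3$ instead of an energy estimate from \cite{liu2006}). The only cosmetic difference is how the pointwise bound is obtained: you derive $|u_x|\leq |u|\leq \f{1}{2}\|m_0\|_{L^1}$ directly from the sign-definite formulas for $u\pm u_x$, whereas the paper bounds the truncated convolutions $v_\ell$ by $\|m_0\|_{L^1}$ and passes to the pointwise limit $v_\ell\rightarrow u_x$.
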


\subsection{Outline of the manuscript}

The paper is structured as follows: in the next section we revisit some crucial results regarding qualitative nature of solutions of the DP equation, as well as we prove the existence of a bijection playing vital role in the proofs of theorems \ref{teo2.3}, \ref{teo2.5} and \ref{teo2.6}. 

In section \ref{sec4} we prove theorem \ref{teo2.1} and its corollary, showing that both formulations of the DP equation are PSS equations. This is the cornerstone result for establishing the uniqueness of domains possessing the structure of a PSS surface determined by the unique solution of the DP subject to an initial datum (theorem \ref{teo2.2}) and pay more attention to compactly supported initial condition (theorem \ref{teo2.3}), that are proved in section \ref{sec5}.

In section \ref{sec6} we prove that the abstract surface determined by Theorem \ref{teo2.2} can be locally immersed in the Euclidean space by determining a (locally determined) second fundamental form (Theorem \ref{teo2.4}). 

In section \ref{sec7} we consider what happens to a surface determined by an initial datum giving rise to a solution breaking in finite time (Theorem \ref{teo2.5}) and then we consider a result of more global nature, expressed in Theorem \ref{teo2.6}.

Finally, due to the triad \eqref{1.0.12} it is a natural temptation to wonder whether \eqref{1.0.4} might be geometrically integrable. This is a particularly intriguing question because the DP equation has the Lax pair \eqref{1.0.6}, which would prevent it to be geometrically integrable. On the other hand, the fact that \eqref{1.0.4} depends on a parameter is a strong indication of geometric integrability. We shed light on this question in section \ref{sec8}.

Our results are discussed in section \ref{sec9} and our conclusions are given in section \ref{sec10}.

\section{Qualitative results}\label{sec2}

The DP equation has an infinite hierarchy of conserved quantities. This is a foregone consequence of the existence of a bi-Hamiltonian structure \cite{dehoho}. Some of them are the following:
\bb\label{3.0.1}
E_1(t)=\int_\R m(x,t)dx,\quad E_2(t)=\int_\R m(x,t)v(x,t)dx,\quad E_3(t)=\int_\R u(x,t)^3dx,
\ee
where $v(x,t):=((4-\p_x^2)^{-1}u)(x,t)$.

Moreover, using the momentum, or potential, $m(x,t)=u(x,t)-u_{xx}(x,t)$, we can rewrite the DP equation \eqref{1.0.3} as
\bb\label{3.0.2}
m_t+um_x+3u_xm=0.
\ee

\begin{lemma}{\tt(\cite[Theorem 2.2]{yin2003})}\label{lema3.1} Given $u_0\in H^s(\R)$, $s>3/2$, there exists a maximal value $T=T(u_0)>0$ and a unique solution $u\in C^0(H^s(\R),[0,T))\cap C^1(H^{s-1}(\R),[0,T))$ to the problem
\bb\label{3.0.3}
\left\{\ba{l}
\ds{u_t+uu_u=-\p_x(1-\p_x^2)^{-2}\Big(\f{3}{2}u^2} \Big),\\
\\
u(x,0)=u_0(x).
\ea\right.
\ee

Moreover, the solution depends continually on the initial datum and the maximal time of existence $T$ can be chosen independently of $s$. 
\end{lemma}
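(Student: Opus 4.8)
The plan is to treat the Cauchy problem \eqref{3.0.3} as an abstract quasilinear evolution equation $u_t + uu_x = P(u)$ in the Sobolev scale and to establish local well-posedness by the energy method (equivalently, by Kato's theory of quasilinear equations). Here the nonlocal term $P(u) = -\p_x(1-\p_x^2)^{-2}\big(\f{3}{2}u^2\big)$ is harmless: by the algebra property of $H^s$ for $s>1/2$ one has $u^2\in H^s$ whenever $u\in H^s$, and the operator $\p_x(1-\p_x^2)^{-2}$ is smoothing, so $P$ maps $H^s$ into $H^{s+1}$ (indeed higher) and is Lipschitz on bounded subsets of $H^s$. The genuinely dangerous term is the transport nonlinearity $uu_x$, which loses one derivative; the entire difficulty of the proof is to close the estimates \emph{without} actually losing that derivative, so a naive contraction on the mild formulation is not available.

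First I would introduce an approximation scheme to manufacture candidate solutions with uniform bounds --- a parabolic regularization $u^\eps_t + u^\eps u^\eps_x - \eps u^\eps_{xx} = P(u^\eps)$, Friedrichs mollifiers, or a Galerkin truncation --- for which short-time existence is elementary. The heart of the argument is then a uniform a priori estimate. Applying $\Lambda^s=(1-\p_x^2)^{s/2}$ and pairing with $\Lambda^s u$ in $L^2$, the transport contribution splits as
$$
\langle \Lambda^s(uu_x),\Lambda^s u\rangle = \langle u\,\p_x\Lambda^s u,\Lambda^s u\rangle + \langle [\Lambda^s,u]u_x,\Lambda^s u\rangle .
$$
The first term integrates by parts to $-\tfrac12\langle u_x\Lambda^s u,\Lambda^s u\rangle$, bounded by $\|u_x\|_{L^\infty}\|u\|_{H^s}^2$, while the commutator term is controlled by the Kato--Ponce estimate $\|[\Lambda^s,u]u_x\|_{L^2}\lesssim \|u_x\|_{L^\infty}\|u\|_{H^s}$. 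Combined with the Lipschitz bound on $P$, this yields the differential inequality $\tfrac{d}{dt}\|u\|_{H^s}^2 \le C\|u\|_{H^s}^3$, whose comparison solution survives at least until $T\sim 1/\|u_0\|_{H^s}$. This gives a uniform lifespan and uniform bounds for the approximants, and a solution is extracted by compactness and passage to the limit.

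Uniqueness and continuous dependence I would obtain from an energy estimate on the difference $w=u-\tilde u$ of two solutions. Since the transport structure again costs a derivative, $w$ must be estimated one level down, in $H^{s-1}$, giving $\tfrac{d}{dt}\|w\|_{H^{s-1}}^2 \le C\big(\|u\|_{H^s},\|\tilde u\|_{H^s}\big)\|w\|_{H^{s-1}}^2$ and hence, by Gronwall, uniqueness (the case $w(0)=0$) and Lipschitz dependence in the $H^{s-1}$ topology. Upgrading this to genuine continuity of the data-to-solution map in the full $H^s$ topology is the customary delicate point, resolved by a Bona--Smith regularization argument.

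Finally, the claim that the maximal time $T$ is independent of $s$ rests on a blow-up criterion: one shows that if $T<\infty$ is maximal then necessarily $\int_0^T\|u_x(\tau)\|_{L^\infty}\,d\tau=\infty$. Because this quantity involves only $\|u_x\|_{L^\infty}$, which for \emph{every} $s>3/2$ is dominated by $\|u\|_{H^s}$ through the Sobolev embedding (Lemma \ref{lema2.1}), rerunning the a priori estimate at any higher level $s'>s$ --- with the growth of $\|u\|_{H^{s'}}$ governed by the \emph{same} $\|u_x\|_{L^\infty}$ integral --- shows the $H^{s'}$ solution cannot break down before the $H^s$ one, and the converse inclusion is immediate; hence the lifespan coincides for all admissible $s$. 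The main obstacle throughout is precisely the one-derivative loss in $uu_x$: the a priori estimate, the uniqueness estimate, and the blow-up criterion all hinge on absorbing that loss via commutator/Kato--Ponce estimates rather than through any elementary fixed-point scheme.
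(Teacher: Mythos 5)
Your proposal is sound, but be aware that the paper contains no proof of this statement at all: Lemma \ref{lema3.1} is quoted directly from Yin's work (Theorem 2.2 of the cited paper), and Yin's argument there runs through Kato's abstract semigroup theory for quasilinear evolution equations rather than the concrete energy scheme you construct. In that approach one writes the problem as $u_t+A(u)u=f(u)$ with $A(u)=u\partial_x$, takes the auxiliary space $Y=H^{s-1}(\R)$ and isomorphism $Q=(1-\partial_x^2)^{1/2}$, and verifies Kato's three hypotheses (quasi-m-accretivity of $A(u)$, boundedness of the commutator $QA(u)Q^{-1}-A(u)$ on $Y$, and local Lipschitz continuity of $f$ on $H^s$); Kato's theorem then yields existence, uniqueness \emph{and} continuous dependence in $C^0([0,T);H^s)\cap C^1([0,T);H^{s-1})$ in one stroke, with no separate Bona--Smith step. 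Your route --- regularized approximants, the Kato--Ponce commutator estimate closing $\frac{d}{dt}\|u\|_{H^s}^2\lesssim\|u_x\|_{L^\infty}\|u\|_{H^s}^2$, an $H^{s-1}$ difference estimate for uniqueness, Bona--Smith for continuity of the data-to-solution map --- is the standard self-contained alternative, and the underlying commutator estimates are the same ones hidden in Kato's hypotheses. What your version buys is the explicit lifespan bound $T\gtrsim\|u_0\|_{H^s}^{-1}$ and, crucially, the blow-up criterion in terms of $\int_0^T\|u_x(\cdot,\tau)\|_{L^\infty}\,d\tau$, which is exactly what the $s$-independence of $T$ requires; in the Kato framework that final claim is not automatic and needs a supplementary argument of precisely the kind you sketch. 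Two minor points: the smoothing operator $\partial_x\Lambda^{-2}$ (with $\Lambda^{-2}=(1-\partial_x^2)^{-1}$, as forced by consistency with \eqref{2.1.1}) gains exactly one derivative, so $P:H^s(\R)\rightarrow H^{s+1}(\R)$ but not ``higher'' --- harmless, since only boundedness and local Lipschitzness of $P$ on $H^s(\R)$ are used; and your difference estimate lives at level $s-1$, which is where the full hypothesis $s>3/2$ (rather than merely $s>1/2$ for the algebra property) enters, alongside the Sobolev embedding controlling $\|u_x\|_{L^\infty}$.
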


A consequence of the precedent result is that any solution emanating from an initial datum is defined on a certain strip ${\cal S}=\R\times(0,T)$ for some $T>0$ only determined by the initial datum, but not $s$. This is a local result in nature.



\begin{theorem}\label{teo3.1}
    Let $u_0\in H^4(\R)$, $u\in C^0(H^4(\R),[0,T))\cap C^1(H^{3}(\R),[0,T))$ be the corresponding solution of \eqref{2.0.2} and $m(x,t)=u(x,t)-u _{xx}(x,t)$. Then the flux of $u$ defines a bijection $\varphi:\R\times[0,T)\rightarrow \R\times[0,T)$ such that:
    \begin{enumerate}\letra
        \item $\R\times\{0\}$ is invariant under $\varphi$;
        \item $\varphi\big|_{\R\times(0,T)}:\R\times(0,T)\rightarrow \R\times(0,T)$ is a $C^1$ diffeomorphism;
        \item $(m\circ\varphi)(x,t)q_x(x,t)^3=m_0(x)$, $(x,t)\in\R\times[0,T)$, where
        \bb\label{3.0.4}
        q_x(x,t)=e^{\ds{\int_0^t u_x(\varphi(x,s))ds}}.
        \ee
    \end{enumerate}
\end{theorem}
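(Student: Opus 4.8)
The plan is to construct the flow map $\varphi$ explicitly via the characteristic ODEs of the transport equation \eqref{3.0.2}, and then read off the three claimed properties. Since the DP equation in potential form reads $m_t + um_x + 3u_x m = 0$, I would introduce the characteristic curves $q(x,t)$ defined by the ODE
\begin{equation*}
\frac{\partial q}{\partial t}(x,t) = u(q(x,t),t), \qquad q(x,0)=x,
\end{equation*}
and set $\varphi(x,t) := (q(x,t),t)$. Because $u\in C^0(H^4,[0,T))\cap C^1(H^3,[0,T))$, the Sobolev embedding (Lemma \ref{lema2.1}) gives $u(\cdot,t)\in C^3$ with $u,u_x,u_{xx}$ bounded and continuous in $(x,t)$; in particular $u$ and $u_x$ are Lipschitz in the spatial variable uniformly on compact time intervals. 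This regularity is exactly what is needed to invoke the standard existence--uniqueness and smooth-dependence theory for ODEs, guaranteeing that $q(\cdot,t)$ is well defined on all of $\R\times[0,T)$ and is $C^1$ in $(x,t)$.

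\emph{Property (a)} is immediate from the initial condition $q(x,0)=x$, so $\R\times\{0\}$ is fixed pointwise and hence invariant. For \emph{property (b)}, I would differentiate the characteristic ODE in $x$ to obtain the variational equation
\begin{equation*}
\frac{\partial}{\partial t} q_x(x,t) = u_x(q(x,t),t)\, q_x(x,t), \qquad q_x(x,0)=1,
\end{equation*}
which integrates to the closed form \eqref{3.0.4}, namely $q_x(x,t)=\exp\big(\int_0^t u_x(\varphi(x,s))\,ds\big)$. Since $u_x$ is continuous and bounded on $\R\times[0,t]$ for each $t<T$, the exponent is finite, so $q_x(x,t)>0$ everywhere. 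A strictly positive $x$-derivative makes $q(\cdot,t)$ a strictly increasing (hence injective) map for each fixed $t$; combined with the fact that $q(\cdot,t)$ is proper/surjective onto $\R$ (which follows because $u$ is bounded, so characteristics cannot escape to $\pm\infty$ in finite $x$-range), $\varphi|_{\R\times(0,T)}$ is a bijection with $C^1$ inverse by the inverse function theorem, i.e. a $C^1$ diffeomorphism.

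\emph{Property (c)} is the heart of the matter: it says the quantity $(m\circ\varphi)\,q_x^3$ is conserved along characteristics and equals its initial value $m_0$. The clean way to see this is to differentiate $\Phi(x,t):=m(q(x,t),t)\,q_x(x,t)^3$ in $t$. Using the chain rule, $\partial_t[m(q,t)] = m_x(q,t)q_t + m_t(q,t) = u\,m_x + m_t$ evaluated along the characteristic, and by the potential equation \eqref{3.0.2} this equals $-3u_x m$. Meanwhile $\partial_t(q_x^3)=3q_x^2\partial_t q_x = 3q_x^3 u_x(q,t)$ from the variational equation. Hence
\begin{equation*}
\frac{\partial \Phi}{\partial t} = (-3u_x m)\,q_x^3 + m\cdot 3q_x^3 u_x = 0,
\end{equation*}
so $\Phi(x,t)=\Phi(x,0)=m_0(x)$, which is precisely the claim.

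\textbf{The main obstacle} I anticipate is not the conservation computation itself but justifying that every step is legitimate at the stated regularity: $m=u-u_{xx}$ only inherits $C^1$-in-$x$ and continuity-in-$t$ from $u\in H^4$, so $m_x$ and $m_t$ exist merely as continuous functions, and I must check that $\Phi$ is genuinely differentiable in $t$ with the chain rule applicable (rather than assuming the $C^\infty$ setting typical of the geometric literature). The care needed is exactly the ``incompatibility between analysis and geometry'' flagged in the introduction: I would verify that the composition $m\circ\varphi$ is $C^1$ in $t$ by combining the $C^1$ regularity of $\varphi$ with the continuity of $m_x,m_t$, and that surjectivity of $q(\cdot,t)$ onto $\R$ holds by a uniform bound on $|q(x,t)-x|\le \int_0^t\|u(\cdot,s)\|_\infty\,ds$, ensuring no loss of properness. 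Everything else reduces to the classical method of characteristics.
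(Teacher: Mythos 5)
Your proposal is correct and takes essentially the same route as the paper: the paper introduces the same characteristic flow $q_t=u(q,t)$, $q(x,0)=x$, obtains \eqref{3.0.4} and the positivity of $q_x$ (citing Constantin and Yin for the ODE well-posedness that you instead re-derive from Sobolev embedding and Picard--Lindel\"of), concludes that $\varphi(x,t)=(q(x,t),t)$ is a $C^1$ diffeomorphism of $\R\times(0,T)$ via the Jacobian $\det J_\varphi=q_x>0$, and proves (c) by differentiating $m\circ\varphi$ along the flow and integrating. Your packaging of (c) as $\partial_t\bigl(m(q(x,t),t)\,q_x(x,t)^3\bigr)=0$ is an equivalent form of the paper's computation of $\frac{d}{dt}m(\varphi(x,t))$ followed by integration; note that your coefficient $-3u_x m$, coming from \eqref{3.0.2}, is the correct one (the factor displayed in the paper's proof at that step is a typo), and your closing regularity check that $m\in C^1$ justifies the chain rule at the stated regularity.
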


\begin{proof}
    Consider the auxiliary Cauchy problem:
\begin{equation}\label{3.0.5}
\begin{cases}
q_t(x,t) = u(q,t),\\
\\
q(x,0) = x.
\end{cases}
\end{equation}

By \cite[Theorem 3.1]{const2000-1}, see also \cite[Lemma 3.2]{yin2004}, this problem has a unique solution $q\in C^1(\R\times[0,T),\R)$. Moreover, for each fixed $t$, the function $q(\cdot,t):\R\rightarrow\R$ defines a one-parameter family of increasing diffeomorphisms, since $q_x(x,t)$, given by \eqref{3.0.4}, is strictly positive.

Let us define the mapping $\varphi:\R\times[0,T)\rightarrow\R\times[0,T)$ by $\varphi(x,t) = (q(x,t),t)$. It can be easily verified that $\varphi$ is a continuous bijection, and $\varphi(x,0) = (q(x,0),0) = (x,0)$. Denote by $J_\varphi(x,t)$ the Jacobian matrix of $\varphi$ at the point $(x,t)$. Through a simple calculation, it can be shown that $\det{J_\varphi(x,t)} = q_x(x,t)$. Combining this result with \eqref{3.0.4}, we deduce that $\varphi$ is a local diffeomorphism when restricted to the set $V = \R\times(0,T)$. Since $\varphi(V) = V$, $\varphi\big|_V$ is $C^1$ everywhere, and the function $q(\cdot,t)$ is a $C^1$ diffeomorphism, we can conclude that $\varphi\big|_V$ is a global $C^1$ diffeomorphism.

It remains to be proved that $m(\varphi(x,t))q_x(x,t)^3=m_0(x)$. After fixing $x\in\R$, we can derive the following expression:
\[
\frac{d}{dt}m(\varphi(x,t)) = (m_t + m_x q_t)(\varphi(x,t)) = 2u_x(\varphi(x,t)) m(\varphi(x,t)),
\]
where we used equations \eqref{3.0.2} and \eqref{3.0.5}. The result is then obtained by integrating the above relation from $0$ to $t$, and taking \eqref{3.0.4} into account.
\end{proof}

\section{The DP equations as a pseudospherical equation: proof of Theorem \ref{teo2.1} and its corollary}\label{sec4}

First of all, let $T>0$ and consider the function space ${\cal B}:=C^{0}(H^{4}(\R),[0,T))\cap C^{1}(H^{3}(\R),[0,T))$. By the Sobolev Embedding Theorem (see Lemma \ref{lema2.1}), for a fixed $t\in(0,T)$, we have $u(\cdot,t)\in H^{4}$ and $u_t(\cdot,t)\in H^{3}$ and, in particular, ${\cal B}\subseteq C^{3,1}(\R\times[0,T))\subseteq C^1(\R\times[0,T))$. 

{\bf Proof of Theorem \ref{teo2.1}.} Let $v\in{\cal B}$. A straightforward calculation shows that
\bb\label{4.0.1}
(1-\p_x^2)\Big(v_t+vv_x+\f{3}{2}\p_x\Lambda^{-2}\Big(v^2\Big)\Big)=v_t-v_{txx}+4vv_x-3v_xv_{xx}-vv_{xxx}.
\ee

From \eqref{4.0.1} we see that $u\in {\cal B}$ is a solution of the local form of the DP equation \eqref{1.0.4} if and only if it is a solution of the non-local form \eqref{2.1.1}.

Let $u\in{\cal B}$ and consider the one-forms \eqref{1.0.12}. After some reckoning, we get
\bb\label{4.0.2}
\ba{lcl}
d\omega_1-\omega_3\wedge\omega_2&=&\ds{(1-\p_x^2)\Big(u_t+uu_x+\f{3}{2}\p_x\Lambda^{-2}\Big(u^2\Big)\Big)dx\wedge dt,}\\
\\
d\omega_2-\omega_1\wedge\omega_3&=&0,\\
\\
d\omega_3-\omega_1\wedge\omega_2&=&\ds{-(1-\p_x^2)\Big(u_t+uu_x+\f{3}{2}\p_x\Lambda^{-2}\Big(u^2\Big)\Big)dx\wedge dt},
\ea
\ee
that, in view of \eqref{4.0.1}, is equivalent to
\bb\label{4.0.3}
\ba{lcl}
d\omega_1-\omega_3\wedge\omega_2&=&\ds{\Big(u_t-u_{txx}+4uu_x-3u_xu_{xx}-uu_{xxx}\Big)dx\wedge dt,}\\
\\
d\omega_2-\omega_1\wedge\omega_3&=&0,\\
\\
d\omega_3-\omega_1\wedge\omega_2&=&\ds{-\Big(u_t-u_{txx}+4uu_x-3u_xu_{xx}-uu_{xxx}\Big)dx\wedge dt}.
\ea
\ee

As a result, if $u$ is a solution of some of any of the two possible forms of the DP, then it is a solution of another and both \eqref{4.0.2} and \eqref{4.0.3} reduce to \eqref{1.0.8}.

Finally, we observe that
\bb\label{4.0.4}
\omega_1\wedge\omega_2=\mp 2\sqrt{1+\mu^2}(u_x^2-2uu_x+uu_{xx}),
\ee
meaning that $\omega_1\wedge\omega_2$ is not identically zero.\hfill$\square$

{\bf Proof of Corollary \ref{cor2.1}.} It is immediate and follows from \eqref{1.0.12} and \eqref{1.0.9}. \hfill$\square$

\section{Surfaces determined by Cauchy problems: proof of theorems \ref{teo2.2}--\ref{teo2.3}}\label{sec5}

Here we establish bridges connecting qualitative aspects of solutions with the geometry determined by them.

{\bf Proof of Theorem \ref{teo2.2}.} Applying Lemma \ref{lema3.1} with $s=4$ we conclude the existence of solution $u\in C^{0}(H^{4}(\R),[0,T))\cap C^{1}(H^{3}(\R),[0,T))$ for the DP equation, where $T$ is uniquely determined by $u_0$. As a result, the function $u$, for $t\neq0$, is defined on the open set ${\cal S}=\R\times(0,T)$, which is completely determined by $T$ and, ultimately, by $u_0$. Therefore, the forms \eqref{1.0.12} are $C^1$, satisfy \eqref{1.0.8}, and are defined everywhere on ${\cal S}$. Moreover, since $u_0$ is non-trivial, then $u$ cannot vanish everywhere on ${\cal S}$.

Let us show the existence of a connected set $\emptyset\neq{\cal C}\subseteq{\cal S}$ endowed with the structure of a PSS.

In view of \eqref{4.0.4} and the regularity of the solution $u$, such a set would not exist if and only if $\omega_1\wedge\omega_2=0$ on ${\cal S}$, which is the same to say that $u$ is a solution of the DP equation satisfying the constraint
\bb\label{5.0.1}
uu_{xx}-2uu_x+u_x^2=0.
\ee

The latter equation can be integrated once, and we then obtain
\bb\label{5.0.2}
uu_x-u^2=f(t),
\ee
for some continuous function $f$. Since both $u$ and $u_x$ vanishes at infinity and $f$ depends only on $t$, we conclude that $f(t)\equiv0$. Bringing this into \eqref{5.0.2} we can easily integrate the result and obtain
$$
u(x,t)=h(t)e^x,
$$
for some continuous function $h$. This would then imply that $u_0(x)=h(0)e^x$, which contradicts the fact that $u_0\in L^2(\R)$ and is non-trivial.

The contradiction pointed out above implies $(uu_{xx}-2uu_x+u_x^2)(x_0,t_0)\neq0$, for some $(x_0,t_0)\in{\cal S}$. Let $F(x,t)=(uu_{xx}-2uu_x+u_x^2)(x,t)$. Without loss of generality, we may assume $F(x_0,t_0)>0$. Since $u\in C^{0}(H^{4}(\R),[0,T))\cap C^{1}(H^{3}(\R),[0,T))$, then $F\in C^1(\R\times[0,T))$, wherefrom we conclude the existence of $\epsilon>0$ such that $B_\epsilon(x_0,t_0)\subseteq{\cal S}$, where $B_\epsilon(x_0,t_0)$ is the usual disc of centre $(x_0,t_0)$ and radius $\epsilon$, and
$$
F\big|_{B_\epsilon(x_0,t_0)}>0.
$$

To conclude, let 
$$
U:=\{(x,t)\in{\cal S};\,\,\,F(x,t)=0\}.
$$
The set $U$ is closed by construction, and clearly we have $B_\epsilon(x_0,t_0)\subseteq {\cal S}\setminus U$. Let ${\cal C}$ be the connected component of ${\cal S}\setminus U$ containing the disc $B_\epsilon(x_0,t_0)$. By construction, for any $p\in{\cal C}$, we have $\omega_1\wedge\omega_2\big|_p\neq0$. \hfill$\square$

{\bf Proof of Theorem \ref{teo2.3}.} Let $u$ be the corresponding solution of the DP equation subject to $u(x,0)=u_0(x)$, and $\varphi$ be the bijection given in theorem \ref{teo2.1}. Define $\gamma_-,\,\gamma_+:[0,T)\rightarrow\R^2$ by $\gamma_-(t)=\varphi(a,t)$ and $\gamma_+(t)=\varphi(b,t)$. By construction, we have $\gamma_+(t)=(q(b,t),t)$ and $\gamma_-(t)=(q(a,t),t)$, and thus, $\pi_1(\gamma_+(t))=q(b,t)$ and $\pi_1(\gamma_-(t))=q(a,t)$. Then these two curves clearly satisfy the conditions in $a)$, $b)$ and $c)$ and their images lie on $\overline{{\cal S}}$. From them we can obtain the desired curves for Theorem \ref{teo2.3}.

From \cite[Theorem 2.5]{henry}, the conditions on $u_0$ imply that $u$ can be expressed as
$$
u(x,t)=\left\{
\ba{lcl}
\ds{\f{E_+(t)}{2}e^{-x}},\quad\text{for}\quad x>q(b,t),\\
\\
\ds{\f{E_-(t)}{2}e^{+x}},\quad\text{for}\quad x<q(a,t),
\ea
\right.
$$
where
$$
E_\pm(t)=\int_{q(a,t)}^{q(b,t)}e^{\mp x}m(y,t)dy=\int_{\pi_1(\gamma_-(t))}^{\pi_1(\gamma_+(t))}e^{\mp x}m(y,t)dy.
$$

For $x>q(b,t)$, we have $uu_{xx}-2uu_x+u_x^2=4u^2=E_+(t)^2e^{-2x}=E(t)^2e^{-2x}$, where $E$ is given by \eqref{2.3.6}, and $m=0$, that substituted into \eqref{2.3.4}, implies \eqref{2.3.5}. For $x<q(a,t)$, we again have $m=0$, but now $uu_{xx}-2uu_x+u_x^2=0$, that yields $g_{12}=g_{22}=0$.
\hfill$\square$

\section{Proof of theorem \ref{teo2.4}: Immersions of surfaces determined by an initial datum}\label{sec6}

{\bf Proof of Theorem \ref{teo2.4}.} Theorem \ref{teo2.2} ensures the existence of $C^1$ one-forms defined on a strip ${\cal S}$ endowing some simply connected set ${\cal C}$ with the structure of a PSS. Let us then take $U:={\cal C}$. 

Our equation belongs to the class of equations described in \cite[Theorem 3.4]{keti2015} with parameters $\gamma=2$, $\lambda=1$, $\beta=0$, $m_1=2$ and $m_2=0$. If we assume $\mu=0$, then \cite[Proposition 3.7]{tarcisio} tells us that the second fundamental form is given by \eqref{2.2.8} with functions $a(x)=\pm\sqrt{L(2m)}$, $b(x)=-b_0e^{4x}$ and $c(x)=a(x)-a'(x)$, where $L(x)=\sigma e^{2z}-b_0 e^{4z}-1$, $\sigma,b_0\in\R$ are constants satisfying $\sigma^2>4b_0^2$ and $\sigma>0$.

Moreover, the connection forms are defined on $(x,t)\in U$ provided that
    $$
    \ln\sqrt{\f{\sigma-\sqrt{\sigma^2-4b_0^2}}{2b_0^2}}<2x<\ln\sqrt{\f{\sigma+\sqrt{\sigma^2-4b_0^2}}{2b_0^2}}.
    $$
Varying the parameters $\sigma$ and $b_0$, we can define connection forms
\bb\label{6.0.1}
\omega_{13}=a\omega_1+b\omega_2,\quad \omega_{23}=b\omega_1+c\omega_2
\ee
on each point $(x,t)\in U$.

    For $\mu\neq0$, the functions $a,b,c$ are smooth functions of $z=2x$, given by \cite[Proposition 3.7]{tarcisio} 
    $$
    \begin{array}{lcl}
    a&=&\ds{\frac{1}{2\mu}\left[\pm\mu\sqrt{\varDelta}-\left(\mu^{2}-1\right)b+b_0 e^{2\left(m_{1}x\right)}\right],}\\
    \\
    c&=&\ds{\frac{1}{2\mu}\left[\pm\mu\sqrt{\varDelta}+\left(\mu^{2}-1\right)b-b_0 e^{2\left(m_{1}x\right)}\right]},\\
    \\
    \varDelta&=&{\displaystyle \frac{\left[\left(\mu^{2}-1\right)b-b_0 e^{2\left(m_{1}x\right)}\right]^{2}-4\mu^{2}\left(1-b^{2}\right)}{\mu^{2}}}>0,
    \end{array}
    $$
where $b$ satisfies the ordinary differential equation 
$$
\begin{array}{l}
\ds{\left[\mu\left(1+\mu^{2}\right)\sqrt{\varDelta}\pm\left(\mu^{2}+1\right)^{2}b\mp\left(\mu^{2}-1\right)b_0 e^{2\left(m_{1}x\right)}\right]b^{'}}\\
\\
\ds{+2\left[-\mu\left(1+\mu^{2}\right)\sqrt{\varDelta}\mp b_0\left(\mu^{2}-1\right)e^{2\left(m_{1}x\right)}\right]b\pm2b_0^{2}e^{4\left(m_{1}x\right)}=0.}
\end{array}
$$

Arguing similarly as in \cite[pages 36--37]{tarcisio}, we can show that the connection forms can be locally defined on each point of $U$.
\hfill$\square$

\section{Breakdown of surfaces and global aspects: proof of theorems \ref{teo2.5} and \ref{teo2.6}}\label{sec7}

In this section we prove two results of opposite nature: the first one concerns with the collapse of the surface, by showing that its metric can only be well-defined on a strip of finite height and blows up near some height. The second result tells us that the dual coframe can exist on the upper half plane $\R\times(0,\infty)$ provide that the initial momentum is a one sign $L^1$ function.

{\bf Proof of Theorem \ref{teo2.5}.} Let $T$ be the lifespan of the corresponding solution $u$ of the DP equation subject to $u(x,0)=u_0(x)$, ${\cal S}$ be the strip given in Theorem \ref{teo2.2} and $\varphi$ be the bijection given in Theorem \ref{teo3.1}. 

Since $m=u-u_{xx}$, then $u=G\ast m$, where $G$ is given by \eqref{2.1.2} and $u_x=G'\ast m$. Then, we have the representation formulae
$$
u(x,t)=\f{e^{-x}}{2}\int_{-\infty}^x e^z m(z,t)dz+\f{e^{x}}{2}\int^{\infty}_x e^{-z} m(z,t)dz
$$
and
$$
u_x(x,t)=-\f{e^{-x}}{2}\int_{-\infty}^x e^z m(z,t)dz+\f{e^{x}}{2}\int^{\infty}_x e^{-z} m(z,t)dz.
$$

In particular, we have
$$
(u+u_x)(x,t)=\f{e^{-x}}{2}\int_{-\infty}^x e^z m(z,t)dz.
$$

Define $\gamma(t):=\varphi(x_0,t)$ and $q(t):=q(x_0,t)=\pi_1(\varphi(x_0,t))$, where $\pi_1$ is the projection on the first component, and
\bb\label{7.0.1}
I(t):=(u+u_x)(\gamma(t))=e^{q(t)}\int_{q(t)}^\infty e^{-z}m(z,t)dz.
\ee

Under the conditions on $m_0(\cdot)$ and by Theorem \ref{teo3.1}, we have
$$m(\gamma(t))=m(\varphi(x_0,t))=m_0(x_0)q_x^{-3}(x_0,t)=0$$
and
\bb\label{7.0.2}
I_0:=I(0)=e^{x_0}\int_{x_0}^\infty e^{-z}m_0(z)dx<0.
\ee

Besides, from Appendix \ref{ap1}
\bb\label{7.0.3}
\f{d}{dt}I(t)\leq u(q(t),t)^2-u_x(q(t),t)^2-\f{e^{q(t)}}{2}\int_{q(t)}^\infty e^{-z}(u(z,t)^2-u_x(z,t)^2)dz<0.
\ee

From \eqref{7.0.1} and \eqref{7.0.3} we infer that $I(t)$ is strictly decreasing and negative for any $t$ for which $\gamma(t)$ is defined.

Consider the following new functions
\bb\label{7.0.4}
f(t):=(f_{12}\circ\gamma)(t)=(u(q(t),t)-u_x(q(t),t))^2
\ee
and
\bb\label{7.0.5}
g(t):=u(q(t),t)-u_x(q(t),t).
\ee

Differentiating $g$ with respect to $t$ and taking \eqref{3.0.5} into account, we get (we omit the point $(q(t),t)$ for simplicity)
$$
g'(t)=(u_t-u_{tx})+q'(t)(u_{x}-u_{xx})=u_x^2-\f{3}{2}u^2+\f{3}{2}(\Lambda^{-2}-\p_x\Lambda^{-2})u^2,
$$
where we used \eqref{2.1.1} and the fact that 
$$
u_{tx}=\f{3}{2}u^2-u_x^2-uu_{xx}-\f{3}{2}\Lambda^{-2}u^2.
$$

On the other hand, we have
$$
((\Lambda^{-2}-\p_x\Lambda^{-2})u^2)(q(t),t)=e^{-q(t)}\int_{-\infty}^{q(t)}e^z u(z,t)dz,
$$
that, substituted into the previous equation, gives
\bb\label{7.0.6}
\ba{l}
g'(t)=\ds{u_x(q(t),t)^2-\f{3}{2}u(q(t),t)^2+\f{3}{2}e^{-q(t)}\int_{-\infty}^{q(t)}e^z u(z,t)dz=u_x(q(t),t)^2-\f{3}{2}u(q(t),t)^2}\\
\\
\ds{+\f{e^{-q(t)}}{2}\int_{-\infty}^{q(t)}e^z(u(z,t)^2-u_x(z,t)^2)dz+e^{-q(t)}\int_{-\infty}^{q(t)}e^z\Big(u(z,t)^2+\f{u_x(z,t)^2}{2}\Big)dz}.
\ea
\ee

From \cite[page 347]{const2000-1} we have
$$
\f{u(q(t),t)^2}{2}\leq e^{-q(t)}\int_{-\infty}^{q(t)}e^z\Big(u(z,t)^2+\f{u_x(z,t)^2}{2}\Big)dz,
$$
that substituted into \eqref{7.0.6}, implies
$$
g'(t)\geq u_x(\gamma(t))^2-u(\gamma(t))^2+\f{e^{-q(t)}}{2}\int_{-\infty}^{q(t)}e^z\big(u(z,t)^2-u_x(z,t)^2\big)dz.
$$
By \cite[Eq. (4.17), page 815]{liu2006} we have
\bb\label{7.0.7}
e^{-q(t)}\int_{-\infty}^{q(t)}e^z\big(u(z,t)^2-u_x(z,t)^2\big)dz\geq u(q(t),t)^2-u_x(q(t),t)^2,
\ee
that, jointly with the previous inequality, gives
$$
g'(t)\geq \f{u_x(\gamma(t))^2-u(\gamma(t))^2}{2}\geq 0.
$$

Therefore, $g$ is a non-decreasing function, and since $g(0)=u_0(x_0)-u_0'(x_0)>0$, then it is positive as long as it is defined. 

Returning to \eqref{7.0.4} and \eqref{7.0.5}, it is immediate that $f(t)=g(t)^2$, and the preceding discussion tells us that
$$
f'(t)\geq 2(u(\gamma(t))-u_x(\gamma(t)))\f{u_x(\gamma(t))^2-u(\gamma(t))^2}{2}=-f(t)^2I(t)
$$
and hence, by \eqref{7.0.2}--\eqref{7.0.3},
$$
f'(t)\geq f(t)^2(-I(t))\geq f(t)^2(-I_0)>0,
$$
meaning that 
$$
t\mapsto \f{1}{f(t)}
$$
is well defined and positive as long as $f$ exists. Therefore, we have
$$
\f{d}{dt}\Big(-\f{1}{f(t)}\Big)=\f{f'(t)}{f(t)^2}\geq -I_0.
$$

Integrating the last inequality, we get
$$
\f{1}{f(0)}-\f{1}{f(t)}\geq -I_0t,
$$
that, after rearranging terms, we obtain
$$
0<\f{1}{f(t)}-I_0t\leq \f{1}{f(0)}=\f{1}{(u_0(x_0)-u_0'(x_0))^2}.
$$

Consequently, $f(t)\rightarrow\infty$ before $t$ reaches
$$
T_0:=-\f{1}{I_0(u_0(x_0)-u_0'(x_0))^2}.
$$

Since $g_{12}(x,t)=(m(x,t)\pm2\mu)(1+\mu^2)f_{12}(x,t)$ and $g_{22}(x,t)=(1+\mu^2)f_{12}(x,t)^2$, we have
$$
g_{12}(\gamma(t))=\pm 2\mu(1+\mu^2)f(t)\quad\text{and}\quad g_{22}(\gamma(t))=(1+\mu^2)f(t)^2.
$$

Regardless $\mu$, we then have
$$
\lim_{t\nnearrow T_0}g_{22}(\gamma(t))=+\infty,
$$
while
$$
\lim_{t\nnearrow T_0}|g_{12}(\gamma(t))|=
\left\{
\ba{lcl}
+\infty, &\text{if}&\mu\neq0,\\
\\
0, &\text{if}&\mu=0.
\ea
\right.
$$
\hfill$\square$

{\bf Proof of Theorem \ref{teo2.6}.} It suffices to prove that $u$ is a solution in the class $C^{0}(H^{4}(\R),[0,\infty))\cap C^{1}(H^{3}(\R),[0,\infty))$, since it then implies that $u$ is defined on $\R\times(0,\infty)$.

Applying \cite[Theorem 1.1]{freire-AML}, with $n=4$ and $b=3$, we can guarantee that $u\in C^{0}(H^{4}(\R),[0,\infty))\cap C^{1}(H^{3}(\R),[0,\infty))$ provided that $|u_x(\cdot,t)|$ is bounded. 

For each positive integer $\ell$, let 
\bb\label{7.0.8}
v_\ell(x,t)=-\f{1}{2}\int_{-\ell}^{+\ell}\sign{(x-y)}e^{-|x-y|}m(y,t)dy.
\ee
It is immediate to check that
\bb\label{7.0.9}
|v_\ell(x,t)|\leq \f{1}{2}\int_{-\ell}^{+\ell}e^{-|x-y|}|m(y,t)|dy\leq\int_{-\infty}^{+\infty}|m(y,t)|dy=\|m(\cdot,t)\|_{L^1}.
\ee

Since $m_0$ is either non-negative or non-positive, by item $c)$ in Theorem \ref{teo3.1}, we conclude that $\sign{m(x,t)}=\sign{m_0(x)}$. In view of the conserved quantity $E_1$ in \eqref{3.0.1}, we have $E_1(t)=\|m_0\|_{L^1}$ or $E_1(t)=-\|m_0\|_{L^1}$, depending only on whether $m_0$ is non-negative or non-positive. As a result, we have 
$$
\|m(\cdot,t)\|_{L^1}=\int_\R |m(y,t)|dy=\pm\int_\R m(y,t)dy=\pm\int_\R m_0(y)dy=\int_\R|m_0(y)|dy=\|m_0\|_{L^1},
$$
that, combined with \eqref{7.0.8} and \eqref{7.0.9}, give 
\bb\label{7.0.10}
|v_\ell(x,t)|\leq \|m_0\|_{L^1}.
\ee

Finally, we observe from \eqref{7.0.8} that
$$
\ba{lcl}
v_\ell(x,t)&\rightarrow&\ds{-\f{1}{2}\int_{-\infty}^{+\infty}\sign{(x-y)}e^{-|x-y|}m(y,t)dy}\\
\\
&=&\ds{\p_x\int_{-\infty}^{+\infty}\f{e^{-|x-y|}}{2}m(y,t)dy=\p_xu(x,t)=u_x(x,t)}
\ea
$$
pointwise as $\ell\rightarrow\infty$. This, jointly with \eqref{7.0.8}--\eqref{7.0.10}, yields the result.
\hfill$\square$

\section{Pseudo-potentials and lack of geometric integrability}\label{sec8}

We now investigate whether \eqref{1.0.12} may define a one-parameter non-trivial family of PSS.

\begin{proposition}\label{prop8.1}
Let $\omega_1$, $\omega_2$ and $\omega_3$ be the one-forms \eqref{1.0.12}, $\gamma$ and $\Gamma$ functions defined by
\bb\label{8.0.1}
-2d\Gamma=\omega_3+\omega_2-2\Gamma\omega_1+\Gamma_2(\omega_3-\omega_2)
\ee
and
\bb\label{8.0.2}
2d\gamma=\omega_3-\omega_2-2\gamma\omega_1+\gamma^2(\omega_3+\omega_2).
\ee

Then system \eqref{8.0.1}--\eqref{8.0.2} is completely integrable and the one forms
\bb\label{8.0.3}
\theta=\omega_1-\Gamma(\omega_3-\omega_2)
\ee
and
\bb\label{8.0.4}
\hat{\theta}=-\omega_1+\gamma(\omega_3+\omega_2)
\ee
are closed on the solutions of the DP equation.
\end{proposition}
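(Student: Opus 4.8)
The plan is to verify complete integrability by checking that the exterior derivatives of the right-hand sides of \eqref{8.0.1}--\eqref{8.0.2} vanish modulo the Riccati-type equations themselves, and then to show directly that $d\theta$ and $d\hat\theta$ are zero once the structure equations \eqref{1.0.8} hold. Both $\Gamma$ and $\gamma$ are pseudo-potentials: equations \eqref{8.0.1} and \eqref{8.0.2} are Riccati equations in $\Gamma$ and $\gamma$ respectively, and the standard route is to treat the one-form on each right-hand side as defining a distribution in the $(x,t,\Gamma)$ (resp. $(x,t,\gamma)$) space and apply the Frobenius theorem.

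First I would establish integrability of \eqref{8.0.1}. Writing $-2\,d\Gamma = \eta$, where $\eta := \omega_3+\omega_2-2\Gamma\omega_1+\Gamma^2(\omega_3-\omega_2)$, complete integrability is equivalent to $d\eta \equiv 0$ whenever $\eta = 0$, i.e. $d\eta \wedge \eta$ should vanish, or more directly one computes $d(d\Gamma)=0$ using the structure equations. Concretely I would differentiate $\eta$, substitute the relations \eqref{1.0.8} for $d\omega_1,\,d\omega_2,\,d\omega_3$, and also eliminate $d\Gamma$ using \eqref{8.0.1} itself; the expected outcome is that all terms cancel, confirming $d(d\Gamma)=0$ identically and hence local solvability of \eqref{8.0.1}. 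The same computation, mutatis mutandis, handles \eqref{8.0.2}: one sets $2\,d\gamma = \hat\eta$ with $\hat\eta := \omega_3-\omega_2-2\gamma\omega_1+\gamma^2(\omega_3+\omega_2)$, differentiates, and uses \eqref{1.0.8} together with \eqref{8.0.2} to show the integrability condition holds. These two calculations are routine but must be done carefully because of the quadratic $\Gamma^2$ and $\gamma^2$ terms, whose derivatives produce cross terms $2\Gamma\,d\Gamma\wedge(\omega_3-\omega_2)$ that have to be reduced again via \eqref{8.0.1}.

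Next I would show $\theta$ is closed. Differentiating \eqref{8.0.3} gives $d\theta = d\omega_1 - d\Gamma\wedge(\omega_3-\omega_2) - \Gamma\,(d\omega_3 - d\omega_2)$. Into this I substitute $d\omega_1=\omega_3\wedge\omega_2$, $d\omega_3=\omega_1\wedge\omega_2$, $d\omega_2=\omega_1\wedge\omega_3$ from \eqref{1.0.8} (valid on solutions of the DP equation by Theorem \ref{teo2.1}), and replace $d\Gamma$ using \eqref{8.0.1}. The expectation is that the $\Gamma$-linear and $\Gamma^2$-quadratic contributions organize into exactly the combination needed to cancel $\omega_3\wedge\omega_2$ and the $\Gamma(\omega_1\wedge\omega_2-\omega_1\wedge\omega_3)$ terms, yielding $d\theta=0$. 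The identical scheme applied to \eqref{8.0.4}, using \eqref{8.0.2}, gives $d\hat\theta=0$.

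The main obstacle I anticipate is purely bookkeeping: the Riccati substitutions introduce $\Gamma$- and $\gamma$-dependent two-forms, and one must confirm that the coefficients of every independent wedge product ($\omega_1\wedge\omega_2$, $\omega_1\wedge\omega_3$, $\omega_2\wedge\omega_3$) vanish \emph{as polynomials in $\Gamma$} (resp. $\gamma$) of degrees up to two or three, not merely on a single solution. Getting the signs right in $\omega_3\wedge\omega_2 = -\omega_2\wedge\omega_3$ and tracking the factor of $\mp 2\sqrt{1+\mu^2}$ appearing in $\omega_1\wedge\omega_2$ from \eqref{4.0.4} is where an error is most likely to creep in; I would organize the computation by collecting powers of $\Gamma$ to make each cancellation transparent. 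Note finally that the closedness assertion is explicitly stated to hold \emph{on the solutions of the DP equation}, so I may freely use \eqref{1.0.8} rather than the inhomogeneous \eqref{4.0.2}, which is what makes the cancellations exact.
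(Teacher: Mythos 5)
Your proposal is correct, and your closedness computation for $\theta$ and $\hat{\theta}$ is exactly what the paper does (its proof reads: apply $d$ and use the structure equations, which hold on solutions by Theorem \ref{teo2.1}). Where you genuinely differ is on the complete integrability of \eqref{8.0.1}--\eqref{8.0.2}: the paper disposes of this in one line by citing general results (\cite[Proposition 4.1]{chern}, \cite[Theorem 2.2]{cat}, \cite[Lemma 3.2]{reyes2006-sel}), which assert precisely that for \emph{any} triad satisfying the PSS structure equations these Riccati systems are completely integrable, whereas you re-derive it in the case at hand via the Frobenius computation: differentiate, substitute \eqref{1.0.8}, eliminate $d\Gamma$ (resp. $d\gamma$) using the equation itself, and check that the coefficients of $1$, $\gamma$, $\gamma^2$ vanish after the cross terms are reduced. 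I checked this: for \eqref{8.0.2} the $\gamma^0$ terms give $(\omega_1\wedge\omega_2-\omega_1\wedge\omega_3)+(\omega_1\wedge\omega_3-\omega_1\wedge\omega_2)=0$, the $\gamma^1$ terms give $-2\omega_3\wedge\omega_2+2\omega_3\wedge\omega_2=0$, and the $\gamma^2$ terms give $(1-2+1)\,\omega_1\wedge(\omega_3+\omega_2)=0$, so your route closes. Your version buys self-containedness (the cited lemmas are themselves proved by exactly this computation), at the cost of the bookkeeping the paper sidesteps; the citation buys brevity and makes transparent that nothing depends on the specific DP coefficients in \eqref{1.0.12}, only on \eqref{1.0.8} holding on solutions. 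Two minor remarks: your phrasing that integrability ``is equivalent to $d\eta\wedge\eta=0$'' is imprecise, since the Frobenius condition applies to the full Pfaffian form $2\,d\gamma-\hat{\eta}$ on $(x,t,\gamma)$-space rather than to $\hat{\eta}$ alone, but the computation you actually carry out ($d(d\gamma)=0$ after elimination) is the correct and standard one; and your worry about tracking the factor $\mp2\sqrt{1+\mu^2}$ from \eqref{4.0.4} is moot, because neither half of the argument ever needs the explicit coefficients of \eqref{1.0.12}. (You also correctly read the typo $\Gamma_2$ in \eqref{8.0.1} as $\Gamma^2$.)
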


\begin{proof}
The proof that \eqref{8.0.1}--\eqref{8.0.2} is completely integrable under the conditions above follows from \cite[Proposition 4.1]{chern}, \cite[Theorem 2.2]{cat} or \cite[Lemma 3.2]{reyes2006-sel}. 

Applying $d$ to $\theta$ and $\hat{\theta}$ and taking \eqref{1.0.12} into account, we conclude the result.
\end{proof}

Let us consider the form $\hat{\theta}=\theta_1(x,t)dx+\theta_2(x,t)dt$ and rewrite the forms \eqref{1.0.12} as \eqref{2.3.2}. A simple calculation gives
\bb\label{8.0.5}
\ba{lcl}
\theta_1&=&\gamma(f_{31}+f_{21})-f_{11},\\
\\
\theta_2&=&\gamma(f_{32}+f_{22})-f_{12}.
\ea
\ee
Let ${\cal E}:=u_t-u_{txx}+4uu_x-3u_xu_{xx}-uu_{xxx}$. Proposition \ref{prop8.1} says that $\hat{\theta}$ is closed on the solutions of the DP equation, that is,
$$
d\hat{\theta}\big|_{{\cal E}=0}=(\p_x\theta_2-\p_t\theta_1)\big|_{{\cal E}=0}dx\wedge dt\equiv0,
$$
meaning that,
$$
\p_x\theta_2-\p_t\theta_1=0
$$
is a conservation law for the DP equation. If we assume that $\theta_2$ vanishes at $x=\pm\infty$ and
\bb\label{8.0.6}
\int_\R\theta_1(x,t)dx
\ee
is convergent for any $t$ for which $u$ is defined, then
$$
\f{d}{dt}\int_\R \theta_1(x,t)dx=\theta_2(x,t)\Big|_{-\infty}^{\infty}=0,
$$
meaning that \eqref{8.0.6} is a conserved quantity on the solutions of the DP equation.

Substituting the coefficients of the forms \eqref{1.0.12} into \eqref{8.0.5}, defining $\zeta:=-(\mu\mp\sqrt{1+\mu^2})$, taking $\zeta^{-1}=\mu\pm\sqrt{1+\mu^2}$ into account, we obtain
\bb\label{8.0.7}
\ba{lcl}
\theta_1&=&\zeta(m-2)\gamma-m,\\
\\
\theta_2&=&\ds{\Big(\f{\gamma}{\zeta}-1\Big)(u_x^2-2uu_x-uu_{xx})}.
\ea
\ee
This proves the following result.
\begin{theorem}\label{teo8.1}
    Let $\gamma$ be the function defined in Proposition \ref{prop8.1}. Then
    \bb\label{8.0.8}
    \Big(\zeta(m-2)\gamma-m\Big)_t=\Big(\Big(\f{\gamma}{\zeta}-1\Big)(u_x^2-2uu_x-uu_{xx})\Big)_x
    \ee
    defines a conservation law for the DP equation.
\end{theorem}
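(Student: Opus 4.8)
First I would note that Theorem \ref{teo8.1} is, at bottom, just the coordinate form of the closedness of $\hat{\theta}$ already secured in Proposition \ref{prop8.1}. Writing $\hat{\theta}=\theta_1\,dx+\theta_2\,dt$ and computing the exterior derivative,
$$
d\hat{\theta}=(\p_x\theta_2-\p_t\theta_1)\,dx\wedge dt,
$$
Proposition \ref{prop8.1} guarantees that the right-hand side vanishes whenever $u$ solves the DP equation. Hence $\p_t\theta_1=\p_x\theta_2$ on solutions, which is precisely the assertion that $\p_t\theta_1-\p_x\theta_2=0$ is a conservation law with conserved density $\theta_1$ and flux $\theta_2$. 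The whole task therefore reduces to computing $\theta_1$ and $\theta_2$ explicitly and matching them with \eqref{8.0.7}.

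To compute the coefficients I would substitute the entries of \eqref{1.0.12} into \eqref{8.0.5}, equivalently into $\hat{\theta}=-\omega_1+\gamma(\omega_3+\omega_2)$. Abbreviating $m=u-u_{xx}$ and $P=u_x^2-2uu_x+uu_{xx}$, the decisive algebraic observation is that $\omega_2$ and $\omega_3$ combine into a single proportional one-form,
$$
\omega_3+\omega_2=\big(\mu\pm\sqrt{1+\mu^2}\big)\big[(m+2)\,dx+P\,dt\big].
$$
Identifying the common factor $\mu\pm\sqrt{1+\mu^2}$ with $\zeta^{-1}$ via the definition $\zeta:=-(\mu\mp\sqrt{1+\mu^2})$ collapses $\hat{\theta}=-\omega_1+\gamma(\omega_3+\omega_2)$ into the density and flux recorded in \eqref{8.0.7}. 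Feeding these into the identity $\p_t\theta_1=\p_x\theta_2$ obtained above then produces exactly \eqref{8.0.8}, completing the argument.

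The only genuinely substantive ingredient is Proposition \ref{prop8.1}, namely the complete integrability of \eqref{8.0.1}--\eqref{8.0.2} and the ensuing closedness of $\hat{\theta}$, which I am assuming here; granted that, the proof of Theorem \ref{teo8.1} is purely computational. I expect the sole delicate point to be the bookkeeping of the coupled $\pm/\mp$ signs linking the one-forms \eqref{1.0.12} to the definition of $\zeta$: one must verify that the scalar pulled out of $\omega_3+\omega_2$ is indeed $\zeta^{-1}$ rather than $\zeta$, and that the additive shift in the density has the correct sign, so that $\theta_1$ and $\theta_2$ agree with \eqref{8.0.7}. The exterior-derivative step and the final reading-off of density and flux are immediate and carry no hidden difficulty.
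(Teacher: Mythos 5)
Your proposal is correct and follows essentially the same route as the paper's own proof: Proposition \ref{prop8.1} gives $d\hat{\theta}\big|_{{\cal E}=0}=0$, hence $\p_t\theta_1=\p_x\theta_2$ on solutions, and the density and flux come from substituting \eqref{1.0.12} into \eqref{8.0.5}, using exactly your observation that $\omega_3+\omega_2=\zeta^{-1}\big[(m+2)\,dx+(u_x^2-2uu_x+uu_{xx})\,dt\big]$ with $\zeta^{-1}=\mu\pm\sqrt{1+\mu^2}$ (a collapse independently confirmed by \eqref{8.0.9}). The sign check you flagged is in fact warranted: the direct computation yields $\theta_1=\f{\gamma}{\zeta}(m+2)-m$ and $\theta_2=\big(\f{\gamma}{\zeta}-1\big)(u_x^2-2uu_x+uu_{xx})$, so the printed density $\zeta(m-2)\gamma-m$ in \eqref{8.0.7}--\eqref{8.0.8} and the $-uu_{xx}$ in the flux are typographical slips of the paper rather than of your argument --- the factor $\zeta(m-2)$ corresponds instead, up to sign, to the companion form $\theta$ with pseudo-potential $\Gamma$, since $\omega_3-\omega_2=\zeta\big[(m-2)\,dx+(u_x^2-2uu_x+uu_{xx})\,dt\big]$.
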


\begin{definition}{\tt (\cite[Definition 2.1]{reyes2006-sel})}
    An equation is said to be {\it geometrically integrable} if it describes a nontrivial one-parameter family of pseudo-spherical surfaces.
\end{definition}

The qualification {\it nontrivial one-parameter family of pseudo-spherical surfaces} has to be understood as follows: the triad of one-forms defining the first fundamental form and the Levi-Civita connection of the surface depends on a parameter that cannot be removed from any allowed transformation. That said, the pseudo-potentials also have a dependence on a parameter that could be eliminated.

\begin{theorem}\label{teo8.2}
    The DP equation, with the forms \eqref{1.0.12}, is not geometrically integrable.
\end{theorem}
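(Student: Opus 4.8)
The plan is to prove that the parameter $\mu$ in the triad \eqref{1.0.12} is inessential: it can be eliminated by a transformation of the forms that is allowed, in the sense that it preserves the pseudospherical structure equations \eqref{1.0.8}. Writing $m=u-u_{xx}$ and $Q=u_x^2-2uu_x+uu_{xx}$, and fixing the upper sign for definiteness, I first record the triad at $\mu=0$,
\begin{equation*}
\omega_1^0=m\,dx+Q\,dt,\qquad \omega_2^0=2\,dx,\qquad \omega_3^0=m\,dx+Q\,dt=\omega_1^0,
\end{equation*}
and observe that for every $\mu$ the combination $\omega_1^2+\omega_2^2-\omega_3^2$ equals $4\,dx^2$, independently of $\mu$. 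This computation is the real clue: it says the whole $\mu$-family lies on a single orbit of the Lorentz group $O(2,1)$ preserving the quadratic form $\omega_1^2+\omega_2^2-\omega_3^2$.

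Next I would exhibit the explicit constant matrix realizing this. Setting $c=\sqrt{1+\mu^2}$ and $s=\mu$, so that $c^2-s^2=1$, and
\begin{equation*}
B_\mu=\begin{pmatrix}1&0&0\\0&c&s\\0&s&c\end{pmatrix},
\end{equation*}
a direct substitution using $\omega_2^0=2\,dx$ and $\omega_3^0=\omega_1^0=m\,dx+Q\,dt$ shows that $(\omega_1,\omega_2,\omega_3)^{T}=B_\mu(\omega_1^0,\omega_2^0,\omega_3^0)^{T}$ reproduces exactly the forms \eqref{1.0.12}. The matrix $B_\mu$ is a hyperbolic rotation in the $(\omega_2,\omega_3)$-plane, hence $B_\mu^{T}\,\mathrm{diag}(1,1,-1)\,B_\mu=\mathrm{diag}(1,1,-1)$, i.e. $B_\mu\in SO^{+}(2,1)$, and crucially it is independent of $x$, $t$ and $u$.

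I would then verify that $B_\mu$ preserves \eqref{1.0.8}. This is a short direct check: if the $\omega_i^0$ satisfy \eqref{1.0.8}, then using $c^2-s^2=1$ one finds $d\omega_1=\omega_3\wedge\omega_2$, $d\omega_2=\omega_1\wedge\omega_3$ and $d\omega_3=\omega_1\wedge\omega_2$ for the transformed triad as well. Equivalently, $B_\mu$ corresponds via \eqref{1.0.10} to a constant conjugation $X\mapsto gXg^{-1}$, $T\mapsto gTg^{-1}$ of the $\frak{sl}(2,\R)$ matrices, which leaves the zero-curvature condition \eqref{1.0.7} invariant precisely because $g$ is constant. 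Consequently, applying the allowed transformation $B_\mu^{-1}\in SO^{+}(2,1)$ to the triad \eqref{1.0.12} returns the fixed, $\mu$-independent triad $(\omega_1^0,\omega_2^0,\omega_3^0)$. Thus the entire family \eqref{1.0.12} is the single boost-orbit of one fixed pseudospherical triad; the parameter is removable, the family is trivial, and by the definition preceding the statement the DP equation is not geometrically integrable. The lower-sign case is handled identically, the sign amounting to composing $B_\mu$ with a fixed element of $O(2,1)$.

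The genuinely delicate point is conceptual rather than computational: once one notices the $\mu$-invariance of $\omega_1^2+\omega_2^2-\omega_3^2$, producing $B_\mu$ is routine, but one must argue that removability by a \emph{constant} element of $SO(2,1)$ is exactly what disqualifies the family from being a nontrivial one-parameter family of PSS. I would make this precise by pinning down the class of allowed transformations (constant Lorentz changes of the coframe, equivalently constant gauge conjugations of the representation \eqref{1.0.10}) and stressing that $B_\mu$ is constant in all variables, so that no essential spectral parameter survives in the $\frak{sl}(2,\R)$ picture — consistent with the true integrability of DP being carried by the $\frak{sl}(3,\R)$ Lax pair \eqref{1.0.6}.
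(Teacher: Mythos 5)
Your proof is correct, and it reaches the conclusion by a genuinely different route from the paper's. The paper argues via pseudo-potentials: it derives the Riccati-type system \eqref{8.0.9} with parameter $\zeta=-(\mu\mp\sqrt{1+\mu^2})$, removes $\zeta$ by the rescaling $\gamma=\zeta(1+\bar{\gamma})$ to get the parameter-free system \eqref{8.0.10}, and concludes that only a single conserved quantity (Theorem \ref{teo8.1}) arises, so $\mu$ is inessential; this is complemented by Theorems \ref{teo8.4} and \ref{teo8.5}, which map \eqref{1.0.12} onto the auxiliary parameter-free triad \eqref{8.0.11}, and a Remark notes Theorem \ref{teo8.2} also follows from those. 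You bypass pseudo-potentials entirely: the observation that $\omega_1^2+\omega_2^2-\omega_3^2=4\,dx^2$ for all $\mu$ identifies the family as one $SO^{+}(2,1)$-orbit, and your boost $B_\mu$ (which indeed maps the $\mu=0$ triad to \eqref{1.0.12} and preserves \eqref{1.0.8} via $c^2-s^2=1$; the lower sign composes with $\mathrm{diag}(1,-1,-1)$ and $\mu\mapsto-\mu$) removes the parameter in a single constant step. Your version has two concrete advantages: the comparison triad is the $\mu=0$ member of the same family rather than \eqref{8.0.11}, and your gauge element is constant, so the conjugation invariance of the ZCR \eqref{1.0.7} is immediate — whereas the paper's matrices $S_\pm$ in Theorem \ref{teo8.4} depend on $m$, and the conjugation identity stated before \eqref{8.0.12} is only automatic for constant $S$. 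What the paper's longer route buys is the extra structure itself: the pseudo-potentials, the explicit conservation law \eqref{8.0.8}, and the alternative triad \eqref{8.0.11}. The one point you rightly flag as needing care — that removability by a constant Lorentz change of coframe is exactly the notion of triviality in play — is consistent with the paper's own framework, since its qualification of ``nontrivial'' is precisely that the parameter cannot be removed by any allowed transformation, and the constant matrix of Theorem \ref{teo8.5} is of the same class as your $B_\mu$.
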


\begin{proof}
    If the DP equation were integrable, theN parameter $\mu$ could not be eliminated from the one-forms \eqref{1.0.12}. Given that the functions $\mu\mapsto \mu+\sqrt{1+\mu^2}>0$ and $\mu\mapsto \mu-\sqrt{1+\mu^2}<0$ are diffeomorphisms from $\R$ into $(0,\infty)$ or $(-\infty,0)$, respectively, then parameter $\zeta$ would not be removed from \eqref{8.0.7}. Consequently, expanding $\gamma$ in an infinite series for $\zeta$, we would then obtain an infinite number of conservation laws for the DP equation. As a result, we would have an infinite number of conserved quantities for the equation.

    Given the comments above, it suffices to prove that the parameter $\zeta$ can be eliminated and \eqref{8.0.7} implies on the existence of a single conserved quantity.

    From \eqref{8.0.2} and \eqref{1.0.12} and some reckoning, we have
    \bb\label{8.0.9}
    \ba{lcl}
    2\gamma_x&=&\ds{\zeta(m-2)-2\gamma m+\f{\gamma^2}{\zeta}(m+2)},\\
    \\
    2\gamma_t&=&\ds{\Big(\zeta-2\gamma+\f{\gamma^2}{\zeta}\Big)(u_x^2-2uu_x+uu_{xx}).}
    \ea
    \ee

    Define a new function $\bar{\gamma}$ through the relation $\gamma:=\zeta(1+\bar{\gamma})$ and noticing that $\zeta\neq0$, from \eqref{8.0.9} we have
    \bb\label{8.0.10}
    \ba{lcl}
    2\bar{\gamma}_x&=&4\bar{\gamma}+\bar{\gamma}^2(m+2),\\
    \\
    2\bar{\gamma}_t&=&\bar{\gamma}^2(u_x^2-2uu_x+uu_{xx}),
    \ea
    \ee
    showing that the parameter $\zeta$ can be eliminated from \eqref{8.0.8} and thus, $\mu$ can be removed from \eqref{1.0.12} as well.
\end{proof}

The pseudo-potentials \eqref{8.0.10} were obtained from the one-forms \eqref{1.0.12}. However, reversing the process, from the pseudo-potentials \eqref{8.0.10} we can obtain a new triad of one-forms.

\begin{theorem}\label{teo8.3}
    The pseudo-potentials \eqref{8.0.10} are defined by the one-forms
    \bb\label{8.0.11}
    \ba{lcl}
    \theta_1&=&-2dx,\\
    \\
    \theta_2&=&\ds{\Big(1+\f{m}{2}\Big)dx+\f{u_x^2-2uu_x+uu_{xx}}{2}dt},\\
    \\
    \theta_3&=&\ds{\Big(1+\f{m}{2}\Big)dx+\f{u_x^2-2uu_x+uu_{xx}}{2}dt.}
    \ea
    \ee
\end{theorem}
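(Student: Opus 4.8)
The plan is to recognize Theorem \ref{teo8.3} as the exact inverse of the passage carried out in Proposition \ref{prop8.1}. There a triad $\omega_1,\omega_2,\omega_3$ generated the Riccati equation \eqref{8.0.2} for a pseudo-potential; here the reduced system \eqref{8.0.10} is again of first-order Riccati type in $\bar\gamma$, so I would postulate that $\bar\gamma$ satisfies the canonical pseudo-potential equation built from an unknown triad $\theta_1,\theta_2,\theta_3$, namely
$$
2\,d\bar\gamma=(\theta_3-\theta_2)-2\bar\gamma\,\theta_1+\bar\gamma^2(\theta_3+\theta_2),
$$
and then pin down the $\theta_i$ by matching this against \eqref{8.0.10}.

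Concretely, I would first rewrite \eqref{8.0.10} as the single one-form identity
$$
2\,d\bar\gamma=\big[4\bar\gamma+\bar\gamma^2(m+2)\big]\,dx+\big[\bar\gamma^2(u_x^2-2uu_x+uu_{xx})\big]\,dt,
$$
and then organize both sides by powers of $\bar\gamma$. Equating the coefficients of $\bar\gamma^0$ forces $\theta_3-\theta_2=0$, hence $\theta_3=\theta_2$; equating the coefficients of $\bar\gamma^1$ forces $-2\theta_1=4\,dx$, hence $\theta_1=-2\,dx$; and equating the coefficients of $\bar\gamma^2$ forces $\theta_3+\theta_2=(m+2)\,dx+(u_x^2-2uu_x+uu_{xx})\,dt$, which together with $\theta_3=\theta_2$ yields $\theta_2=\theta_3=(1+\tfrac{m}{2})\,dx+\tfrac12(u_x^2-2uu_x+uu_{xx})\,dt$. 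These are precisely the forms \eqref{8.0.11}, so the matching is essentially the whole content of the theorem.

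I do not expect a genuine obstacle here, since the computation is a determined linear matching; the one point requiring care is consistency, because the ansatz produces one equation per power of $\bar\gamma$ for each of the two components $dx,dt$, and the system is therefore over-determined. The potential clash is at order $\bar\gamma^1$: the canonical equation supplies a term $-2\theta_1$, whose $dt$-component must vanish to agree with the absence of a $\bar\gamma^1\,dt$ term in \eqref{8.0.10}. Since the matching delivers $\theta_1=-2\,dx$ with no $dt$-part, this is automatically satisfied, so the triad is well defined and unique under the ansatz. I would close by remarking that the degeneracy $\theta_2=\theta_3$ is notable in its own right: it reflects that this triad does not furnish a nondegenerate family of pseudospherical structures, consistent with the failure of geometric integrability established in Theorem \ref{teo8.2}. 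If desired, one can verify directly and routinely that \eqref{8.0.11} satisfies the structure equations \eqref{2.3.3} modulo the DP equation (with $d\theta_1=\theta_3\wedge\theta_2=0$ holding identically), confirming complete integrability of \eqref{8.0.10}, which is in any case inherited from \eqref{8.0.2} through the substitution $\gamma=\zeta(1+\bar\gamma)$.
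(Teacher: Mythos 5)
Your proposal is correct, and it carries out precisely the verification the paper itself declares ``immediate and, therefore, omitted'': matching the reduced Riccati system \eqref{8.0.10} against the canonical pseudo-potential equation $2\,d\bar\gamma=(\theta_3-\theta_2)-2\bar\gamma\,\theta_1+\bar\gamma^2(\theta_3+\theta_2)$ power by power in $\bar\gamma$, which forces $\theta_2=\theta_3$, $\theta_1=-2\,dx$, and the stated $\theta_2$, with the over-determinacy resolving automatically as you note. Your closing check is also sound: with $f_{12}=u_x^2-2uu_x+uu_{xx}$ one has $d\theta_2-\theta_1\wedge\theta_3=\frac{1}{2}\bigl(\partial_xf_{12}+2f_{12}-m_t\bigr)dx\wedge dt$, and $\partial_xf_{12}+2f_{12}=3u_xu_{xx}+uu_{xxx}-4uu_x$, so the structure equations hold exactly modulo the DP equation, while $d\theta_1=\theta_3\wedge\theta_2=0$ holds identically.
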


The proof of Theorem \ref{teo8.3} is immediate and, therefore, omitted.

Consider a $\frak{sl}(2,\R)$ valued ZCR \eqref{1.0.7} for a PSS equation ${\cal E}=0$, and let $S\in SL(2,\R)$ be a matrix. Defining $\bar{X}=SXS^{-1}$ and $\bar{T}=STS^{-1}$, it is not difficult to check that
$$
\p_tX-\p_xT+[X,T]=S\Big(\p_t\bar{X}-\p_x\bar{T}+[\bar{X},\bar{T}]\Big)S^{-1},
$$
meaning that the ZCR is preserved under the Gauge transformation 
\bb\label{8.0.12}
X\mapsto SXS^{-1},\quad T\mapsto STS^{-1}.
\ee

\begin{theorem}\label{teo8.4}
The ZCR determined by \eqref{1.0.12} is equivalent to that determined by \eqref{8.0.11}.
\end{theorem}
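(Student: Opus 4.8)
The plan is to compute, via \eqref{1.0.10}, the $\frak{sl}(2,\R)$ pairs $(X,T)$ associated with the triad \eqref{1.0.12} and $(\bar X,\bar T)$ associated with the triad \eqref{8.0.11}, and then to exhibit a \emph{constant} matrix $S\in SL(2,\R)$ (allowed to depend on $\mu$ but not on $(x,t)$) for which $\bar X=SXS^{-1}$ and $\bar T=STS^{-1}$; by \eqref{8.0.12} this is precisely the statement that the two ZCRs are equivalent. Writing $m=u-u_{xx}$, $P=u_x^2-2uu_x+uu_{xx}$ and $\sigma=\sqrt{1+\mu^2}$, substitution into \eqref{1.0.10} yields the decomposition $X=X_0+mX_1$, $T=PX_1$ with $X_0=\begin{pmatrix}\sigma&-\mu\\ \mu&-\sigma\end{pmatrix}$ and $X_1=\frac12\begin{pmatrix}\mu&1-\sigma\\ 1+\sigma&-\mu\end{pmatrix}$, while the same computation for \eqref{8.0.11} gives $\bar X=\bar X_0+m\bar X_1$, $\bar T=P\bar X_1$ with $\bar X_0=\frac12\begin{pmatrix}1&-3\\ -1&-1\end{pmatrix}$ and $\bar X_1=\frac14\begin{pmatrix}1&-1\\ 1&-1\end{pmatrix}$. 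The decisive structural observation is that in \emph{both} triads the time part is proportional to the $m$-coefficient of the space part, through the \emph{same} factor $P$; hence a single conjugation $SX_1S^{-1}=\bar X_1$ forces $STS^{-1}=\bar T$ automatically, and the whole problem collapses to finding one constant $S$ solving simultaneously $SX_0S^{-1}=\bar X_0$ and $SX_1S^{-1}=\bar X_1$.

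I would first check each conjugacy individually. Both $X_0$ and $\bar X_0$ are trace-free with determinant $-1$ (using $\sigma^2-\mu^2=1$), so they share the eigenvalues $\pm1$ and are conjugate; both $X_1$ and $\bar X_1$ are trace-free with zero determinant, hence nilpotent and conjugate. To see that one matrix realises both conjugations simultaneously I would compare the distinguished lines: the $(-1)$-eigenline of $X_0$ is spanned by $(\mu,\sigma+1)^{\mathsf T}$, which is exactly $\ker X_1=\operatorname{im}X_1$, while on the barred side the $(-1)$-eigenline of $\bar X_0$ is $(1,1)^{\mathsf T}=\ker\bar X_1=\operatorname{im}\bar X_1$. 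Thus the requirement that $S$ match eigenvectors of $X_0$ with those of $\bar X_0$ (same eigenvalue) is \emph{compatible} with the requirement that $S$ send the null line of $X_1$ to that of $\bar X_1$. Sending the eigenbasis $\{p_+,p_-\}$ of $X_0$ to scalar multiples $\{\alpha\bar p_+,\beta\bar p_-\}$ of the eigenbasis of $\bar X_0$, the nilpotent condition reduces, upon comparing $SX_1p_+$ with $\bar X_1Sp_+$, to the single scalar relation $\alpha/\beta=(\sigma-1)/\mu$; imposing $\det S=1$ then pins down $\alpha,\beta$ (explicitly $\alpha^2=(\sigma-1)/2$, which is positive for $\mu\neq0$, so $S$ is real) up to an overall sign. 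This produces the required $S\in SL(2,\R)$ and finishes the verification $\bar X=SXS^{-1}$, $\bar T=STS^{-1}$.

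I would also record the conceptual origin of this gauge: it is the matrix lift of the affine change of pseudo-potential $\gamma=\zeta(1+\bar\gamma)$ introduced in the proof of Theorem \ref{teo8.2}, which is exactly the fractional-linear transformation carrying the Riccati system \eqref{8.0.9} of \eqref{1.0.12} into the parameter-free system \eqref{8.0.10} of \eqref{8.0.11}. This explains \emph{why} the entire $\mu$-dependence can be absorbed into a constant matrix, and thereby shows Theorem \ref{teo8.4} to be the matrix-level counterpart of the non-integrability established in Theorem \ref{teo8.2}.

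The main obstacle I anticipate is not the two individual conjugacies but their \emph{simultaneity}: confirming that one constant $S$ handles $X_0$ and $X_1$ at the same time. This is guaranteed by the coincidence of the $(-1)$-eigenline of $X_0$ with the null line of $X_1$ (and its barred analogue), and it is precisely here that the identity $\sigma^2-\mu^2=1$ does the real work. Minor care is then needed with the ambiguous signs in \eqref{1.0.12} and the sign of $\zeta=-(\mu\mp\sigma)$ (one selects the sign making $\zeta>0$), and with the degenerate value $\mu=0$, where $X_0$ is already diagonal; in that case the same construction goes through verbatim with $p_+=(1,0)^{\mathsf T}$ and $p_-=(0,1)^{\mathsf T}$.
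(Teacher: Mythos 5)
Your proof is correct, and it takes a genuinely different route from the paper. The paper's proof is a brute-force exhibition: it writes down the matrices \eqref{8.0.13}--\eqref{8.0.14}, then displays explicit conjugating matrices $S_\pm$ in \eqref{8.0.15} whose entries $a_\pm,b_\pm,c_\pm,d_\pm$ are given in closed form, and simply asserts $\overline{X}=S_\pm X_\pm S_\pm^{-1}$, $\overline{T}=S_\pm T_\pm S_\pm^{-1}$ (treating $\mu=0$ separately, as you do). Notably, the paper's entries contain the momentum $m$, so as written $S_\pm$ appear to depend on $(x,t)$, which sits awkwardly with the gauge identity stated before \eqref{8.0.12}, valid only for constant $S$; in fact the $m$-dependence is spurious (e.g.\ $a_+$ simplifies identically to the constant $1-2\mu+2\sqrt{1+\mu^2}$, and $S_-$ is an $m$-dependent scalar multiple of a constant matrix, harmless under conjugation by the paper's scaling remark), but the paper never says so. Your argument makes this point structural rather than accidental: the decompositions $X=X_0+mX_1$, $T=PX_1$ and $\bar X=\bar X_0+m\bar X_1$, $\bar T=P\bar X_1$ reduce the whole theorem to simultaneous constant conjugacy of the pair $(X_0,X_1)$, and the decisive compatibility --- that the $(-1)$-eigenline of $X_0$ coincides with $\ker X_1=\operatorname{im}X_1$, and likewise on the barred side --- is exactly where $\sigma^2-\mu^2=1$ enters. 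I checked the computations: the eigenlines are as you state, $X_1p_+=\frac{\sigma-1}{\mu}\,p_-$ and $\bar X_1\bar p_+=\bar p_-$ give the scalar relation $\alpha/\beta=(\sigma-1)/\mu$, the normalization $\det S=1$ (using $\det(p_+\,|\,p_-)=2\mu$ and $\det(\bar p_+\,|\,\bar p_-)=4$) yields $\alpha^2=(\sigma-1)/2>0$, and the $\mu=0$ degeneration reproduces the paper's $S_+$ up to the admissible scale factor $\tfrac12$. What your approach buys: a manifestly constant $S\in SL(2,\R)$, so the equivalence is rigorous under the paper's own notion of gauge transformation \eqref{8.0.12}; a proof of \emph{why} one matrix handles $X_0$, $X_1$ and $T$ simultaneously, instead of an unexplained verification; and the conceptual identification of $S$ as the matrix lift of the pseudo-potential substitution $\gamma=\zeta(1+\bar\gamma)$ from the proof of Theorem \ref{teo8.2}, tying Theorems \ref{teo8.2} and \ref{teo8.4} together. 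What the paper's route buys is only the explicit closed-form matrices, at some cost in transparency.
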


\begin{proof}
We first assume that $\mu\neq0$. Let ZCR determined by \eqref{8.0.11} is given by the matrices
\bb\label{8.0.13}
\ba{lcl}
\bar{X}=\ds{\f{1}{2}}\begin{pmatrix}
    1+\ds{\f{m}{2}} & -3-\ds{\f{m}{2}}\\
    \\
    -1+\ds{\f{m}{2}} & -1-\ds{\f{m}{2}}
\end{pmatrix},
&&
\bar{T}=\ds{\f{f_{12}}{2}\begin{pmatrix}
    \ds{\f{1}{2}} & -\ds{\f{1}{2}}\\
    \\
    \ds{\f{1}{2}} & -\ds{\f{1}{2}}
\end{pmatrix}},
\ea
\ee
where $f_{12}=u_x^2-2uu_x+uu_{xx}$, whereas that determined by \eqref{8.0.12} is given by
\bb\label{8.0.14}
\ba{lcl}
X_\pm&=&\ds{\f{1}{2}}\begin{pmatrix}
    \mu m\pm2\sqrt{1+\mu^2} & (1\mp\sqrt{1+\mu^2})m-2\mu\\
    \\
    (1\pm\sqrt{1+\mu^2})m+2\mu & -\mu m\mp2\sqrt{1+\mu^2}
\end{pmatrix},\\
\\
T_\pm&=&\ds{\f{f_{12}}{2}}\begin{pmatrix}
    \mu & 1\mp\sqrt{1+\mu^2}\\
    \\
    1\pm\sqrt{1+\mu^2} & -\mu
\end{pmatrix}.
\ea
\ee

Let
\bb\label{8.0.15}
S_\pm=\begin{pmatrix}
    a_\pm &b_\pm\\
    \\
    c_\pm  & d_\pm
\end{pmatrix},
\ee
where
$$
\ba{lcl}
a_+= \ds{\frac{6+8 \mu ^2+\mu  \left(4-8 \sqrt{1+\mu ^2}\right)+m (-1+2 \mu ) \left(1-2 \mu +2 \sqrt{1+\mu ^2}\right)}{-2+m (-1+2 \mu )+4 \sqrt{1+\mu ^2}}},\\
\\
a_-=\ds{m (-1+2 \mu ) \left(-3+\mu +3 \sqrt{1+\mu ^2}\right)-2 \left(3+\mu +6 \mu ^2-3 \sqrt{1+\mu ^2}+2 \mu  \sqrt{1+\mu ^2}\right)},\\
\\
b_+=\ds{-1-2 \mu +2 \sqrt{1+\mu ^2}},\\
\\
b_-=\ds{ -\left(-1+3 \mu +\sqrt{1+\mu
^2}\right) \left(2+m-2 m \mu +4 \sqrt{1+\mu ^2}\right)},\\
\\
c_+=-1,\\
\\
c_-=\ds{ \left(-1-\mu +\sqrt{1+\mu ^2}\right) \left(2+m-2 m \mu +4 \sqrt{1+\mu ^2}\right)},\\
\\
d_+=1,\\
\\
d_-=\ds{m (-1+2 \mu ) \left(-1-\mu +\sqrt{1+\mu ^2}\right)+2 \left(-1+\mu
-2 \mu ^2+\sqrt{1+\mu ^2}+2 \mu  \sqrt{1+\mu ^2}\right) }.
\ea
$$

Then we have $\overline{X}=S_\pm X_\pm S_\pm^{-1}$ and $\overline{T}=S_\pm T_\pm S_\pm^{-1}$.

We now consider $\mu=0$. In this case the matrices $S_\pm$ changes to
$$
S_+=\begin{pmatrix}
3 & 1\\
\\
-1 & 1\\
\end{pmatrix}
$$
and 
$$
S_-=\begin{pmatrix}
-1 & -3\\
\\
-1 & 1
\end{pmatrix},
$$
respectively, and the result follows.
\end{proof}
\begin{remark}
    The matrices $S_\pm$ in Theorem \ref{teo8.4} are defined up to a scaling, in the sense that if $\lambda\neq0$, then replacing $S_\pm$ by $\lambda S_\pm$ we still transform $X_\pm$ and $T_\pm$ into $\overline{X}$ and $\overline{T}$, respectively, according to \eqref{8.0.12}.
\end{remark}

The fact that we can eliminate the parameter $\mu$ from the ZCR is sufficient to show that the DP cannot be geometrically integrable using the representation \eqref{1.0.12}. That said, it is quite natural to expect that the parameter $\mu$ could be eliminated from the original triad of one-forms.

\begin{theorem}\label{teo8.5}
    The two triads of one-forms \eqref{1.0.12} and \eqref{8.0.11} are related through the transformation
    $$
\begin{pmatrix}
\omega_1\\
\\
\omega_2\\
\\
\omega_3
\end{pmatrix}=\ds{
\begin{pmatrix}
1& 2 &0\\
\\
\mu\mp\sqrt{1+\mu^2}& 2\mu &0\\
\\
-\mu\pm\sqrt{1+\mu^2} & 0 &  \pm2\sqrt{1+\mu^2}
\end{pmatrix}}
\begin{pmatrix}
\theta_1\\
\\
\theta_2\\
\\
\theta_3
\end{pmatrix}.
$$
\end{theorem}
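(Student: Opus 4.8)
The plan is to prove Theorem \ref{teo8.5} by a direct verification, exploiting the fact that both triads are explicit one-forms written in the coordinate coframe $\{dx,dt\}$. I would expand each side of the claimed matrix identity in this basis and match the coefficients of $dx$ and of $dt$ separately, row by row. To lighten the bookkeeping I would first record the abbreviations $m=u-u_{xx}$ and $P:=u_x^2-2uu_x+uu_{xx}$, so that \eqref{1.0.12} reads $\omega_1=m\,dx+P\,dt$, $\omega_2=(\mu m\pm 2\sqrt{1+\mu^2})\,dx+\mu P\,dt$, and $\omega_3=(\pm\sqrt{1+\mu^2}\,m+2\mu)\,dx\pm\sqrt{1+\mu^2}\,P\,dt$, whereas \eqref{8.0.11} becomes $\theta_1=-2\,dx$ and $\theta_2=\theta_3=(1+\tfrac{m}{2})\,dx+\tfrac{P}{2}\,dt$.

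Next I would compute, for each row of the matrix, the indicated linear combination of $\theta_1,\theta_2,\theta_3$ and compare with $\omega_1,\omega_2,\omega_3$. The first row yields $\theta_1+2\theta_2=(-2+2+m)\,dx+P\,dt=\omega_1$, where the $-2$ from $\theta_1$ cancels the constant $2$ generated by the inhomogeneous term of $\theta_2$; this cancellation is precisely the mechanism by which the additive constants disappear and only $m\,dx+P\,dt$ survives. For the second row, $(\mu\mp\sqrt{1+\mu^2})\theta_1+2\mu\theta_2$ produces $dx$-coefficient $-2(\mu\mp\sqrt{1+\mu^2})+2\mu+\mu m=\pm2\sqrt{1+\mu^2}+\mu m$ and $dt$-coefficient $\mu P$, matching $\omega_2$. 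For the third row, $(-\mu\pm\sqrt{1+\mu^2})\theta_1\pm2\sqrt{1+\mu^2}\theta_3$ gives $dx$-coefficient $2\mu\mp2\sqrt{1+\mu^2}\pm2\sqrt{1+\mu^2}\pm\sqrt{1+\mu^2}\,m$, in which the two $\sqrt{1+\mu^2}$ constants cancel to leave $2\mu\pm\sqrt{1+\mu^2}\,m$, and $dt$-coefficient $\pm\sqrt{1+\mu^2}\,P$, matching $\omega_3$. This settles the identity.

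The only genuine care needed lies in the $\pm/\mp$ conventions: the signs appearing in the matrix are correlated with those in \eqref{1.0.12}, so I would fix the top sign throughout, verify the three rows as above, and then note that the bottom-sign case follows verbatim upon replacing $\sqrt{1+\mu^2}$ by $-\sqrt{1+\mu^2}$ consistently. I would also emphasise that, because $\theta_2=\theta_3$, the transformation matrix is \emph{not} uniquely determined—any matrix obtained by shifting weight between its second and third columns induces the same map—so the stated matrix is a convenient representative rather than a canonical one. Beyond this sign and uniqueness bookkeeping there is no substantive obstacle: the claim is a finite-dimensional linear-algebra identity that reduces entirely to matching four scalar coefficients per form.
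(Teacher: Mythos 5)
Your verification is correct and is exactly the computation the paper leaves implicit: the paper's proof of Theorem \ref{teo8.5} is simply the word ``Immediate,'' i.e.\ a direct coefficient check in the $\{dx,dt\}$ coframe, which you have carried out row by row with the right sign correlations (fixing the top choice of $\pm/\mp$ throughout, with the bottom choice following by $\sqrt{1+\mu^2}\mapsto-\sqrt{1+\mu^2}$). Your additional observation that the matrix is not unique because $\theta_2=\theta_3$ in \eqref{8.0.11} is accurate and a worthwhile remark, but it does not alter the argument, which matches the paper's intended proof.
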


\begin{proof}
    Immediate.
\end{proof}

\begin{remark}
    Theorem \ref{teo8.2} is a corollary from both theorems \ref{teo8.4} and \ref{teo8.5}.
\end{remark}

\section{Discussion}\label{sec9}

As mentioned in the Introduction, the CH \eqref{1.0.3} and the DP \eqref{1.0.2} equations are the two most famous members of the $b-$equation \eqref{1.0.1}. While such a preeminence is due to the integrability structures they have, these properties could not be more dramatically different.

The CH equation has a second order Lax pair \eqref{1.0.5}, leading to a $\frak{sl}(2,\R)$ ZCR, that is a strong indication of geometric integrability, a fact first proved by Reyes \cite{reyes2002}. In essence, the existence of a Lax pair ensures the presence of a non-trivial parameter, that combined with Laurent type expansions can be used to derive conserved quantities for the equation. On the other hand, from the ZCR we can find one-forms satisfying the structure equations \eqref{1.0.8} for a PSS having a parameter that cannot be removed. Similarly as for the ZCR, such parameter leads to conserved quantities in an analogous process. This fact was explicitly explored by the CH equation in \cite[Section 4]{reyes2002}.

Likewise the CH equation, the DP also possesses a ZCR, but their similarity in this aspect ends here, since the DP's representation is $\frak{sl}(3,\R)$ valued. This is easily inferred from the third order Lax pair \eqref{1.0.6}. Unlike the CH equation, no one would expect the DP equation to describe PSS based on its Lax pair. In light of these comments, \cite[Example 2.8]{keti2015}, showing the one-forms \eqref{1.0.12}, is particularly disconcerting.

Despite these geometric differences, both CH and DP equations have solutions breaking in finite time as well as permanent solutions, see \cite{const1998-1,const1998-2,const2000-1,liu2006,liu2007,yin2003,yin2004}. Actually, the $b-$family (possibly with specific exceptions) has global and wave breaking solutions, e.g. see \cite{freirejde,pri-AML,freire-AML,zhou}.

The techniques employed in 
\cite{const1998-1,const1998-2,const2000-1} and
\cite{liu2006,liu2007,yin2003,yin2004} that were pioneering works in addressing the problem of blow up and global existence for the CH and DP equations, respectively, are significantly different, even more when concerning wave breaking of solutions. In some sense, this is due to the fact that one of the conserved quantities for the CH equation implies on the invariance of the $H^1(\R)$ invariance of its solutions. 

Despite it has been long known that the CH equation describes PSS and its solutions develop breaking waves \cite{const1998-1,const1998-2,const2000-1}, until quite few ago these two different aspects had not been considered together. In a recent work \cite{freire2023-1} the author proved that certain initial data leading to breaking wave solutions give rise to solutions describing PSS whose metric tensor blows up within a strip of finite height. In a subsequent work \cite{freire2023-2} it was shown that any blowing up solution of the CH equation leads to a surface with unbounded metric tensor in finite region. Actually, it was shown that the metric blows up if and only if the solution breaks in finite time. Also, problems of more global nature were also considered.

The notions of $C^k$ PSS modelled by a space function ${\cal B}$, \cite[Definition 2.1]{freire2023-1}, and generic solution, \cite[Definition 2.2]{freire2023-1}, that correspond to our definitions \ref{def2.1} and \ref{def2.2}, respectively, extended previous notions introduced in the seminal work \cite{chern}, and made possible the results in \cite{freire2023-1,freire2023-2}.

Once the machinery for considering PSS determined by Cauchy problems is built in \cite{freire2023-1,freire2023-2}, and it is known that the DP equation is also a PSS equation, then it is inevitable to investigate geometric consequences of Cauchy problems involving the DP equation in light of the recent developments mentioned.

Our Theorem \ref{teo2.2} shows that any non-trivial initial datum leads to the existence of a PSS, that is an intriguing result when considered in light with the existence of a $\frak{sl}(3,\R)$ ZCR of the DP equation. At first sight it might suggest that all solutions of the DP equation define the same surface. This possibility, however, is not supported by our results: for instance, our Theorem \ref{teo2.5} prescribes the destruction of the corresponding surface within a region of finite height, whereas theorem \ref{teo2.6} ensures the existence of well behaved and bounded coframes in $\R\times(0,\infty)$. In addition, our Theorem \ref{teo2.3} says that we cannot have any surface defined on the left side of a curve entirely determined by the initial datum of the solution. Altogether, theorems \ref{teo2.3}, \ref{teo2.5} and \ref{teo2.6} show that varying the curve $x\mapsto(x,0,u_0(x))$, $u_0\in H^4(\R)$, we obtain completely different surfaces.

Given the points above, the situation that seems to be more likely to happen is the existence of a non-trivial map (possibly, a composition of maps, eventually of non-local nature), connecting the DP with the Sine-Gordon equation. This is a topic worth of further investigation.

From the point of view of geometry, the CH and DP are also significantly different, since the former does not have any known second fundamental form depending on the independent variables and derivatives of the solution up to a finite order. For each non-trivial initial datum the solution of the Cauchy problem \eqref{3.0.3} has a second fundamental form determined everywhere over any simply connected set ${\cal C}$ such that $\omega_1\wedge\omega_2\big|_p\neq0$, for all $p\in{\cal}$, see Theorem \ref{teo2.4}.

\section{Conclusion}\label{sec10}

We prove that any curve $x\mapsto(x,0,u_0(x))$, $u_0\in H^4(\R)$, $u_0\not\equiv0$, define a strip in which a coframe for a (unique, up to a choice of sign) PSS can be defined on some open and simply connected domain. Moreover, on the same domain we also have locally defined connection forms, that jointly with the coframe, determine the first and second fundamental forms for a PSS. Depending on the behavior of the function $u_0$ (initial datum), the one-forms related to the first fundamental form can be defined everywhere, that is not the same to say that they are LI on the entire region, or they can only be defined within a region of finite height and the metric determined by then blows up.

We also showed that our original one-forms, that have a parameter dependence, can be transformed into a new set of forms without any parameter, revealing that the original parameter is not essential. 

\section*{Statements and Declarations}

The author states that no data is used to produce or support the results reported. All of them are built from mentioned literature in the text as well as the results proved within the manuscript.

The author declares there is no conflict of interests to disclose.

\section*{Acknowledgements}
I am thankful to the Department of Mathematical Sciences of the Loughborough University for the warm hospitality and amazing work atmosphere. I am particularly grateful to Jenya Ferapontov and Sasha Veselov for stimulating discussions. Particular thanks is given to Jenya Ferapontov for suggestions made on an early draft. Finally, I'd like to thank CNPq (grant no 310074/2021-5) and FAPESP (grants no 2020/02055-0 and 2022/00163-6) for financial support.

\appendix

\section{Appendix}\label{ap1}

Let us prove \eqref{7.0.3}. For simplicity, let $q:=q(t)$. The key to prove it is showing that
$$
e^{-q}\int_q^\infty e^{-x}\Big(u(x,t)^2-u_x(x,t)^2\Big)dx \geq u(q,t)^2-u_x(q,t)^2,
$$
that is quite similar to \eqref{7.0.7}. This inequality is claimed on \cite[page 816]{liu2006}, but not proved. Instead, they prove \eqref{7.0.7} and argue that \eqref{7.0.3} could be proved in a similar way. For sake of completeness, we present \eqref{7.0.3}.

To begin with, noticing that $u=g\ast m$, we have
$$
u(x,t)=\f{e^{-x}}{2}\int_{-\infty}^x e^\eta m(\eta,t)d\eta+\f{e^x}{2}\int_x^\infty e^{-\eta}m(\eta,t)d\eta
$$
Differentiating the expression above, we can obtain
\bb\label{a01}
\ba{lcl}
u(x,t)+u_x(x,t)&=&\ds{e^x\int_x^\infty e^{-\eta}m(\eta,t)d\eta}=:M(x,t)\\
\\
u(x,t)-u_x(x,t)&=&\ds{e^{-x}\int^x_{-\infty} e^{-\eta}m(\eta,t)d\eta}=:I(x,t).
\ea
\ee

Assuming the condition in Theorem \ref{teo2.5}, we conclude that
\bb\label{a02}
\ba{lcl}
M(t)&:=&M(q,t)>0,\\
\\
I(t)&:=&I(q,t)<0.
\ea
\ee

Define
$$
F(x,t):=\int_x^\infty e^{-\xi}m(\xi,t)d\xi.
$$

Integration by parts yields
\bb\label{a03}
\ba{lcl}
\ds{\int_{-\infty}^q e^{-\eta}F(\eta,t)d\eta}&=&\ds{e^{-q}\int_{-\infty}^q e^{-\xi}m(\xi,t)d\xi+\int_{-\infty}^q e^{-2\eta}m(\eta,t)d\eta}\\
\\
&=&\ds{e^{-2q}M(t)+\underbrace{\int_{-\infty}^q e^{-2\eta}m(\eta,t)d\eta}_{\geq0}\geq e^{-2q}M(t).
}
\ea
\ee

From \eqref{a01} and \eqref{a02}, we have
\bb\label{a04}
u(q,t)^2-u_x(q,t)^2=M(t)I(t)<0,
\ee
that, jointly with \eqref{a01}, gives
$$
\ba{l}
\ds{\int_q^\infty e^{-x}\Big(u(x,t)^2-u_x(x,t)^2\Big)dx}=\ds{\int_q^\infty e^{-x}M(x,t)I(x,t)dx}\\
\\
=\ds{\int_q^\infty e^{-x}\Big(e^x\int_x^\infty e^{-\eta}m(\eta,t)d\eta\Big) \Big(e^{-x}\int^x_{-\infty} e^{-\eta}m(\eta,t)d\eta\Big)dx}\\
\\
=\ds{\int_q^\infty e^{-x}\underbrace{\Big(\int_x^\infty e^{-\eta}m(\eta,t)d\eta\Big)}_{\leq0} \Big(\underbrace{\int^q_{-\infty} e^{-\eta}m(\eta,t)d\eta}_{\geq0}+\underbrace{\int^x_{q} e^{-\eta}m(\eta,t)d\eta}_{\leq0}\Big)dx}\\
\\
\geq\ds{\int_q^\infty e^{-x}\Big(\int_x^\infty e^{-\eta}m(\eta,t)d\eta\Big)\Big(\int^q_{-\infty} e^{-\eta}m(\eta,t)d\eta\Big)dx}\\
\\
=\ds{\int^q_{-\infty} e^{-\eta}m(\eta,t)d\eta\int_q^\infty e^{-x}\Big(\int_x^\infty e^{-\eta}m(\eta,t)d\eta\Big)dx.
}
\ea
$$

Substituting \eqref{a03} into the last expression and using \eqref{a01}--\eqref{a02}, we get
$$
\ba{l}
\ds{\int_q^\infty e^{-x}\Big(u(x,t)^2-u_x(x,t)^2\Big)dx}\geq \ds{\int^q_{-\infty} e^{-\eta}m(\eta,t)d\eta\int_q^\infty e^{-x}\Big(\int_x^\infty e^{-\eta}m(\eta,t)d\eta\Big)dx}\\
\\
\geq\ds{\Big(e^q I(t)\Big)\Big(e^{-2q}M(t)\Big)=e^{-q}M(t)I(t)}.
\ea
$$

Therefore, we have
\bb\label{a05}
-\f{e^q}{2}\int_q^\infty e^{-x}\Big(u(x,t)^2-u_x(x,t)^2\Big)dx\leq-\f{M(t)I(t)}{2},
\ee
that, jointly with \eqref{a04}, implies \eqref{7.0.3}.

\end{document}